\newtheorem{theo}{Theorem}[section]
\newtheorem{lemm}[theo]{Lemma}
\newtheorem{prop}[theo]{Proposition}
\newtheorem{assumption}[theo]{Assumption}
\newtheorem{rema}[theo]{Remark}
\numberwithin{equation}{section}
\newcommand{\dd}{{\rm d}}
\title[Superiority of symplectic methods]
{Superiority of symplectic methods for stochastic Hamiltonian system via asymptotic error distribution} 
\author[Jialin Hong and Ge Liang and Derui Sheng]{}
\subjclass{Primary: 60H35; Secondary: 65C30, 65P10.}
\keywords{Error of numerical method, asymptotic error distributions, central limit theorem, symplectic method.}
\thanks{The first author is supported by National key R\&D Program of China (No.2020YFA0713701), and by the National Natural Science Foundation of China (Nos. 11971470, 12031020, 12171047).}
\thanks{}
\begin{document}
\maketitle

\centerline{\scshape
Jialin Hong$^{{\href{hjl@lsec.cc.ac.cn}{\textrm{\Letter}}}1,2}$, Ge Liang$^{{\href{liangge2020@lsec.cc.ac.cn}{\textrm{\Letter}}}1,2}$ and Derui Sheng$^{{\href{derui.sheng@polyu.edu.hk}{\textrm{\Letter}}}*3}$}

\medskip

{\footnotesize
 \centerline{$^1$Academy of Mathematics and Systems Science, Chinese Academy of Sciences, Beijing
	100190, China}
} 

\medskip

{\footnotesize
 \centerline{$^2$School of Mathematical Sciences, University of Chinese Academy of
	Sciences, Beijing 100049, China}
}
\medskip
{\footnotesize
 \centerline{$^3$Department of Applied Mathematics, The Hong Kong Polytechnic University, Hung Hom, Kowloon, Hong Kong}
}

\bigskip

 \centerline{(Communicated by Handling Editor)}


\begin{abstract}
The superiority of symplectic methods for stochastic Hamiltonian systems has been widely recognized, yet the probabilistic mechanism behind this superiority remains incompletely understood.
This paper studies the superiority of symplectic methods from the perspective of the asymptotic error distribution, i.e., the limit distribution of normalized error. 
Focusing on stochastic Hamiltonian systems driven by additive noise, we obtain the asymptotic limit of the normalized error distribution of the $\theta$ method $(\theta\in[0,1])$ that is symplectic if and only if $\theta=\frac12$.
By establishing upper bounds for the second-order moment of the asymptotic error distribution, we show that the midpoint method minimizes the error constant of the $\theta$ method for a large time horizon $T$. Furthermore, we take the linear stochastic oscillator as a test equation and investigate exact asymptotic error constants of several symplectic and non-symplectic methods. Our result suggests that in the long-time computation, the probability that the error deviates from zero decays exponentially faster for the symplectic methods
than that for the non-symplectic ones.

\end{abstract}



\section{Introduction}
Consider the following $2d$-dimensional stochastic Hamiltonian system:
\begin{equation}\label{SHE-intro}
	\dd \begin{pmatrix}
		X_t^1\\
		X_t^2
	\end{pmatrix}=\begin{pmatrix}
	0&I_d\\
	-I_d&0	
\end{pmatrix}\nabla H(X_t^1,X_t^2)\dd t+\sum_{k=1}^m\begin{pmatrix}
	0&I_d\\
	-I_d&0	
\end{pmatrix}\nabla H_k(X_t^1, X_t^2) \dd W_t^k
\end{equation}
for $t\in [0,T]$
with the initial data $(X_0^1,X_0^2)\in\mathbb R^d\times \mathbb R^d$. Here, $H, H_1,\ldots, H_m:\mathbb{R}^{2d}\rightarrow\mathbb{R}$ are the Hamiltonians and $W=(W^1,\ldots,W^m)^\top$ is an $m$-dimensional Brownian motion defined a complete filtered probability space $\big(\Omega,\mathcal F,\{\mathcal F_t\}_{t\in[0,T]},\mathbb P\big)$ with the filtration $\{\mathcal F_t\}_{t\in[0,T]}$ satisfying the usual conditions. One of the most intrinsic properties for \eqref{SHE-intro} is that its phase flow preserves the symplectic structure in phase space, i.e., $\dd X_t^1\wedge\dd X_t^2=\dd X_0^1\wedge\dd X_0^2$, $t\in[0,T]$ for almost surely $\omega\in\Omega$ (see, e.g., \cite{Bismut}). Such a property is called symplecticity, which implies that the sum of the oriented areas of the projections of phase flow onto each coordinate planes $(x_i,y_i)$, $i=1,2,\ldots,d$, is invariant.

Stochastic Hamiltonian systems have extensive applications in various fields, such as chemistry, physics, and celestial mechanics. A basic principle in designing efficient numerical methods for \eqref{SHE-intro} is that the numerical method should preserve the symplecticity of the phase flow of \eqref{SHE-intro}. Such a numerical method is called a symplectic method, originating from the pioneering work of Milstein et al. (see, e.g., \cite{Milstein2004}).
Extensive numerical simulations (see, e.g., \cite{cui2017stochastic,hong2023symplectic,hong2019invariant,wang2016modified}) show that when applied to stochastic Hamiltonian systems, symplectic methods exhibit long-time stability compared to non-symplectic methods. The underlying mechanism behind the superiority of symplectic methods for stochastic Hamiltonian systems has attracted considerable attention recently.
Inspired by deterministic systems, some studies have explained the long-term stability of symplectic methods through modified equations and backward error analysis techniques (see, e.g., \cite{wang2016modified,computingeffective2018}). From a probabilistic standpoint, \cite{suijizhenzidapiancha,LDPxde} investigated this issue by proving that symplectic methods can asymptotically preserve the large deviation principles of key physical observables associated with stochastic Hamiltonian systems, while \cite{chen2024superioritystochasticsymplecticmethods} addresses this issue from the perspective of the law of iterated logarithm. 
Following this research line, we study the asymptotic error distributions of numerical methods to reveal the superiority of symplectic methods for stochastic Hamiltonian systems.

The asymptotic error distribution characterizes quantitively the probabilistic evolution of the error process $\{U^N_t\}_{t\in[0,T]}$ between the numerical and exact solutions
as the step-size tends to zero. 
Extensive results have been established for various stochastic systems. 
For instance, \cite{JP98} proved that for stochastic differential equations with Lipschitz nonlinearity and multiplicative noise, the normalized error process $\{\sqrt{N}U^N_t\}_{t\in[0,T]}$ of the Euler–Maruyama method converges in distribution to some process $U=\{U_t\}_{t\in[0,T]}$. This result was later extended in \cite{PQM2020} to equations with locally Lipschitz nonlinearities.
The exact rate of convergence of numerical methods for differential equations driven by fractional Brownian motions was investigated in, e.g., \cite{HLN2016, NN2007, ZHL2023,UK25}. For more related works, we also refer to \cite{FU2023, NS2023} for the Euler method of stochastic Volterra equations and to \cite{hong2024asymptoticerrordistributionaccelerated} for the accelerated exponential Euler method of stochastic partial differential equations. 
Beyond numerical accuracy, asymptotic error distributions provide deeper insights into the error structure of numerical methods \cite{BN2006}. Prior work established that the limiting error process for the Euler–Maruyama method forms a gradient in the Dirichlet form sense, enabling error analysis via local Dirichlet forms \cite{BN08,BN2021}. Such structures play a crucial role in error propagation in Monte Carlo simulations, particularly in financial modeling (see, e.g., \cite{BN08}).

In this work, we focus on the stochastic Hamiltonian system with additive noise (i.e., \eqref{SHE-intro} with affine $\{H_k\}_{k=1}^m$) and study the asymptotic error distribution of the $\theta$ method $(\theta\in[0,1])$.
The $\theta$ method is symplectic for \eqref{SHE-intro} if and only if $\theta=\frac12$, corresponding to the midpoint method.
Since this method exhibits first-order strong convergence for the additive noise case, the normalized error is defined using a normalization constant $N$, rather than $\sqrt N$ that is commonly used for multiplicative noise (see, e.g., \cite{JP98, PQM2020}). 
The normalized error process can be decomposed into a negligible part that vanishes identically in probability and a dominant part that converges in distribution to the solution of a stochastic differential equation. This suggests that
 the sharp strong convergence order of the $\theta$ method is $1$ for \eqref{SHE-intro} with additive noise, regardless of the value of $\theta\in[0,1]$. We further provide in Theorem \ref{daa} an upper bound for the second-order moment of the asymptotic error distribution, which depends on $\theta$ and $T$. This bound is minimized when $\theta=\frac12$ for large $T$, which implies that the midpoint method has the smallest asymptotic error constant among all 
$\theta$-methods.

Inspired by \cite{suijizhenzidapiancha}, we take the linear stochastic oscillator as a test equation to further investigate the exact asymptotic error constants of several numerical methods. In detail, we derive the error constant $K_T$ for several concrete symplectic and non-symplectic methods for the linear stochastic oscillator, and find that the growth of $K_T$ is almost proportional to $T$ and $T^3$ for the considered symplectic and non-symplectic methods, respectively. 
Consequently, at the scale $\epsilon$, the probability of the error's deviation from the zero decays exponentially faster for the symplectic methods
than that of the non-symplectic methods.
This comparison reveals the superiority of symplectic methods over non-symplectic methods in the long-time computation from the perspective of the asymptotic error distribution.
Based on these findings, we plan to extend our investigation to error structures of symplectic methods for stochastic Hamiltonian systems in future work.

The rest of this paper is organized as follows. In section \ref{sec:2}, we establish the asymptotic error distribution of the $\theta$ method for \eqref{SHE-intro} with additive noise. By taking the linear stochastic oscillator as a test equation, we further study the asymptotic error distributions of several symplectic and non-symplectic methods in sections \ref{chap3}
 and \ref{chap4}. Numerical experiments are finally performed in section \ref{chap5} to verify the theoretical results.

\section{Asymptotic error distribution of $\theta$ method}\label{sec:2}
In this section, we investigate the asymptotic error distribution of the normalized error for the $\theta$ method applied to the stochastic Hamiltonian system \eqref{SHE-intro} with additive noise. Specifically, we consider the following model
\begin{equation}\label{SHE}
	\dd X_t=b(X_t)\dd t+\sigma \dd W_t,\quad t\in [0,T],
\end{equation}
where $\sigma\in\mathbb{R}^{2d\times m}$ is a constant matrix and 
$$b:=J\nabla H\quad \text{with} \quad J:=\begin{pmatrix}
	0&I_d\\
	-I_d&0	
\end{pmatrix}.$$

%

\begin{assumption}\label{ass1}
	The Hamiltonian $H$ has continuous bounded derivatives up to order $4$.
\end{assumption}
By introducing a uniform partition of $[0,T]$ with step-size $h=\frac{T}{N}$, where $N\in\mathbb{N}_+$, the $\theta$ method applied to \eqref{SHE} reads
\begin{equation}\label{stochastic theta scheme}
	\left\{\begin{split}
		\widehat{X}^{N,\theta}_{k+1}&=\widehat{X}^{N,\theta}_k+h b\Big(\theta \widehat{X}^{N,\theta}_{k+1}+(1-\theta)\widehat{X}^{N,\theta}_k\Big)+\sigma\Delta W_k,\\
		\widehat{X}^{N,\theta}_0&=X_0,
	\end{split}\right.
\end{equation}
where $\Delta W_k:=W_{(k+1)h}-W_{kh}$, $k=0,1,\ldots,N-1$. 
We define the continuous version of the $\theta$ method \eqref{stochastic theta scheme} as
\begin{align*}
	X^{N,\theta}_t
	&=X_0+\int_{0}^{t}b(Z^{N,\theta}_s)\dd s+\int_{0}^{t}\sigma\dd W_s,\quad t\in[0,T],
\end{align*}
where $Z^{N,\theta}_s:=\theta X^{N,\theta}_{R(s)}+(1-\theta)X^{N,\theta}_{L(s)}$, $L(s):=\lfloor s/h\rfloor h$ and $R(s):=\lceil s/h\rceil h$. 
Here, $\lfloor\cdot\rfloor$ and $\lceil \cdot\rceil$ represent the floor and ceiling functions, respectively.
It is clear that $X_{kh}^{N,\theta}=\widehat{X}^{N,\theta}_k$ for $k\in\{0,1,\ldots,N\}$.

\begin{rema}\label{rema2}
	Based on the fundamental convergence theorem (see \cite[Theorem 1.1.1]{Milstein2004}), one can obtain the following estimate
	\begin{equation}\label{first_order}
	\sup_{0\leq t\le T}\big(\mathbb{E}\big[\|X_t-X^{N,\theta}_t\|^{2p}\big]\big)^{\frac{1}{2p}}
	\leq C_ph,\quad \forall \,p\in\mathbb{N}_+.
\end{equation}
\end{rema}
The inequality \eqref{first_order} reveals that the error of the $\theta$-method for \eqref{SHE} has first-order convergence of accuracy, which motivates us to define the normalized error process 
\begin{equation}\label{wss}
	U^{N,\theta}_t:=N(X_t-X^{N,\theta}_t),\quad t\in[0,T].
\end{equation}
To obtain the asymptotic error distribution of \eqref{wss}, we introduce an auxiliary process $\widetilde{U}^{N,\theta}=\{\widetilde{U}^{N,\theta},t\in[0,T]\}$ via
\begin{align}\label{eqfuzhu}
		\widetilde{U}^{N,\theta}_t=&\int_{0}^{t}Db(X_s)\widetilde{U}^{N,\theta}_s\dd s+N\int_{0}^{t}T^{N,\theta}_s Db(X^{N,\theta}_s)b(X^{N,\theta}_s)\dd s\\\notag
	&+N\int_{0}^{t}Db(X^{N,\theta}_{L(s)})(\sigma S^{N,\theta}_s)\dd s\\\notag
	&+N\int_{0}^{t}D^2b(X^{N,\theta}_{L(s)})\big(
	\sigma(W_s-W_{L(s)})
	,\sigma S^{N,\theta}_s\big)\dd s\\\notag
	&-\frac{N}{2}\int_{0}^{t}D^2b\big(X^{N,\theta}_{L(s)}\big)\big(\sigma S^{N,\theta}_s,\sigma S^{N,\theta}_s\big)\dd s
	=:\sum_{i=0}^{4}I_i^{N,\theta}(t)
\end{align}
for $t\in[0,T]$,
where
\begin{equation}\label{TandS}
	\begin{split}
		T^{N,\theta}_s:&=(1-\theta)(s-L(s))-\theta(R(s)-s),\\
	S^{N,\theta}_s:&=(1-\theta)\big(W_s-W_{L(s)}\big)-\theta\big(W_{R(s)}-W_s\big).
	\end{split}
\end{equation}

The following lemma shows that the normalized error process has the same limit distribution as the auxiliary process if either of them converges in distribution.
\begin{lemm}\label{dzm1}
	Let Assumption \ref{ass1} hold. Then for any $\theta\in[0,1]$, we have
	\begin{align*}
		\lim_{N\rightarrow\infty}\mathbb{E}\Big[\sup_{0\leq t\le T}\|U^{N,\theta}_t-\widetilde{U}^{N,\theta}_t\|^2\Big]=0.
	\end{align*}
\end{lemm}
\begin{proof}
	By the mean value theorem,
	\begin{equation}\label{eqwuchaguocheng}
		\begin{split}
			 U^{N,\theta}_t
			&=N\int_{0}^{t}b(X_s)-b(X^{N,\theta}_s)\dd s+N\int_{0}^{t}b(X^{N,\theta}_s)-b(Z^{N,\theta}_s)\dd s\\
			&=N\int_{0}^{t}Db(X_s)(X_s-X^{N,\theta}_s)\dd s\\
			&\quad -N\int_{0}^{t}\int_{0}^{1}(1-\xi)D^2b_\xi(X_s,X^{N,\theta}_s)(X^{N,\theta}_s-X_s,X^{N,\theta}_s-X_s)\dd\xi\dd s\\
			&\quad+N\int_{0}^{t}Db(X^{N,\theta}_s)(X^{N,\theta}_s-Z^{N,\theta}_s)\dd s\\
			&\quad-N\int_{0}^{t}\int_{0}^{1}(1-\xi)D^2b_\xi(X^{N,\theta}_s,Z^{N,\theta}_s)(Z^{N,\theta}_s-X^{N,\theta}_s,Z^{N,\theta}_s-X^{N,\theta}_s)\dd\xi\dd s\\
			&=:\sum_{i=0}^{3}R_i^{N,\theta}(t),
		\end{split}
	\end{equation}
	where $D^2b_\xi(Y,\widetilde{Y}):=D^2b((1-\xi)Y+\xi\widetilde{Y})$ for $\xi\in[0,1]$ and any two $n$-dimensional vectors $Y$ and $\widetilde{Y}$.	
 In view of $X^{N,\theta}_s-Z^{N,\theta}_s
	=b(Z^{N,\theta}_s)T^{N,\theta}_s+\sigma S^{N,\theta}_s$,
we can further obtain
	\begin{align*}
	R_2^{N,\theta}(t)
		&=N\int_{0}^{t}T^{N,\theta}_s Db(X^{N,\theta}_s)b(X^{N,\theta}_s)\dd s+N\int_{0}^{t}T^{N,\theta}_s Db(X^{N,\theta}_s)\big(b(Z^{N,\theta}_s)-b(X^{N,\theta}_s)\big)\dd s\\
		&\quad+N\int_{0}^{t}Db(X^{N,\theta}_{L(s)})(\sigma S^{N,\theta}_s)\dd s+N\int_{0}^{t}D^2b(X^{N,\theta}_{L(s)})\big(
			\sigma(W_s-W_{L(s)})
			,\sigma S^{N,\theta}_s\big)\dd s\\
		&\quad+N\int_{0}^{t}D^2b(X^{N,\theta}_{L(s)})\Big(
				\int_{L(s)}^{s}b(Z^{N,\theta}_s)\dd r
				,\sigma S^{N,\theta}_s\Big)\dd s\\
		&\quad+N\int_{0}^{t}\int_{0}^{1}\big(D^2b_\xi(X^{N,\theta}_{L(s)},X^{N,\theta}_s)-D^2b(X^{N,\theta}_{L(s)})\big)\big(X^{N,\theta}_s-X^{N,\theta}_{L(s)},\sigma S^{N,\theta}_s\big)\dd\xi\dd s,
	\end{align*}
	and
\begin{align*}
	R_3^{N,\theta}(t)
	=&-N\int_{0}^{t}(T^{N,\theta}_s)^2\int_{0}^{1}(1-\xi)D^2b_\xi(X^{N,\theta}_s,Z^{N,\theta}_s)\big(b(Z^{N,\theta}_s),b(Z^{N,\theta}_s)\big)\dd\xi\dd s\\
	&-2N\int_{0}^{t}T^{N,\theta}_s\int_{0}^{1}(1-\xi)D^2b_\xi(X^{N,\theta}_s,Z^{N,\theta}_s)\big(b(Z^{N,\theta}_s),\sigma S^{N,\theta}_s\big)\dd\xi\dd s\\
	&+N\int_{0}^{t}\int_{0}^{1}(1-\xi)\big(D^2b\big(X^{N,\theta}_{L(s)}\big)-D^2b_\xi\big(X^{N,\theta}_s,Z^{N,\theta}_s\big)\big)\big(\sigma S^{N,\theta}_s,\sigma S^{N,\theta}_s\big)\dd\xi\dd s\\
	&-N\int_{0}^{t}\int_{0}^{1}(1-\xi)D^2b\big(X^{N,\theta}_{L(s)}\big)\big(\sigma S^{N,\theta}_s,\sigma S^{N,\theta}_s\big)\dd\xi\dd s.
\end{align*}
It follows from \eqref{eqfuzhu} and \eqref{eqwuchaguocheng} that
\begin{align}\label{smzw}
	&\quad U^{N,\theta}_t-\widetilde{U}^{N,\theta}_t\notag\\
	&=\int_{0}^{t}Db(X_s)\big(U^{N,\theta}_s-\widetilde{U}^{N,\theta}_s\big)\dd s+R_1^{N,\theta}(t)\notag\\
	&\quad+N\int_{0}^{t}T^{N,\theta}_s Db(X^{N,\theta}_s)\big(b(Z^{N,\theta}_s)-b(X^{N,\theta}_s)\big)\dd s\notag\\
		&\quad+N\int_{0}^{t}D^2b(X^{N,\theta}_{L(s)})\Big(
	\int_{L(s)}^{s}b(Z^{N,\theta}_s)\dd r
	,\sigma S^{N,\theta}_s\Big)\dd s\\
	&\quad+N\int_{0}^{t}\int_{0}^{1}\big(D^2b_\xi(X^{N,\theta}_{L(s)},X^{N,\theta}_s)-D^2b(X^{N,\theta}_{L(s)})\big)\big(X^{N,\theta}_s-X^{N,\theta}_{L(s)},\sigma S^{N,\theta}_s\big)\dd\xi\dd s\notag\\
	&\quad-N\int_{0}^{t}(T^{N,\theta}_s)^2\int_{0}^{1}(1-\xi)D^2b_\xi(X^{N,\theta}_s,Z^{N,\theta}_s)\Big(b(Z^{N,\theta}_s),b(Z^{N,\theta}_s)\Big)\dd\xi\dd s\notag\\
	&\quad-2N\int_{0}^{t}T^{N,\theta}_s\int_{0}^{1}(1-\xi)D^2b_\xi(X^{N,\theta}_s,Z^{N,\theta}_s)\big(b(Z^{N,\theta}_s),\sigma S^{N,\theta}_s\big)\dd\xi\dd s\notag\\
	&\quad+N\int_{0}^{t}\int_{0}^{1}(1-\xi)\big(D^2b\big(X^{N,\theta}_{L(s)}\big)-D^2b_\xi\big(X^{N,\theta}_s,Z^{N,\theta}_s\big)\big)\big(\sigma S^{N,\theta}_s,\sigma S^{N,\theta}_s\big)\dd\xi\dd s.\notag
\end{align}
In view of \eqref{first_order}, we have
\begin{align*}
	\mathbb{E}\Big[\sup_{0\leq t\le T}\|R_1^{N,\theta}(t)\|^2\Big]&\leq CN^2\mathbb{E}\bigg[\bigg(\int_{0}^{T}\|X_s^{N,\theta}-X_s\|^2\dd s\bigg)^2\bigg]\leq Ch^2.
\end{align*}
As other terms in \eqref{smzw} can be similarly estimated, 
it holds that
	\begin{align*}
		&\quad\mathbb{E}\Big[\sup_{0\leq s\le t}\big\|U^{N,\theta}_s-\widetilde{U}^{N,\theta}_s\big\|^2\Big]\\
		&\leq C\mathbb{E}\Big[\sup_{0\leq s\le t}\Big\|\int_{0}^{s}Db(X_r)\big(\widetilde{U}^{N,\theta}_r-U^{N,\theta}_r\big)\dd r\Big\|^2\Big]+Ch\\
		&\leq C\int_{0}^{t}\mathbb{E}\Big[\sup_{0\leq r\leq s}\|\widetilde{U}^{N,\theta}_r-U^{N,\theta}_r\|^2\Big]\dd s+Ch,
	\end{align*}
	which, together with the Gronwall inequality, finishes the proof.
\end{proof}

Next, we present the convergence of $I^{N,\theta}_i$, $i=1,2,3,4$, defined in \eqref{eqfuzhu} as the discretization parameter $N$ goes to infinity. 
In the sequel, we denote by $``\stackrel{d}{\Rightarrow}"$ the convergence in distribution for random variables.

 \begin{lemm}\label{rss}
 	Let Assumption \ref{ass1} hold. Then for any $t\in[0,T]$, $I_i^{N,\theta}(t)$ converges to $I_i^{\theta}(t)$ in $\mathbb{L}^2(\Omega;\mathbb{R}^{2d})$ as $N\rightarrow\infty$ for $i=1,3,4$. Besides, $I_2^{N,\theta}\stackrel{d}{\Rightarrow}I_2^{\theta}$ in $\mathbb{C}([0,T];\mathbb{R}^{2d})$ as $N\rightarrow\infty$. Here, $I^{\theta}_i,i=1,2,3,4$ are defined as
 	\begin{align*}
 	I_1^{\theta}(t)&=\frac{(1-2\theta)T}{2}\int_{0}^{t}Db(X_s)b(X_s)\dd s,\\
 		I_2^{\theta}(t)&=\frac{(1-2\theta)T}{2}\int_{0}^{t}Db(X_s)\sigma\dd W_s+\frac{\sqrt3}{6}T\int_{0}^{t}Db(X_s)\sigma\dd\widetilde{W}_s,\\
 		I_{3}^{\theta}(t)&=\frac{(1-\theta)T}{2}\sum_{k=1}^{m}\int_{0}^{t}D^2b(X_s)(\sigma_{\cdot,k},\sigma_{\cdot,k})\dd s,\\
 	I_4^{\theta}(t)&=-\frac{(1-2\theta+2\theta^2)T}{4}\sum_{k=1}^{m}\int_{0}^{t}D^2b(X(s))(\sigma_{\cdot,k},\sigma_{\cdot,k})\dd s,
 	\end{align*}
 	where $\{\widetilde{W}_t,t\in[0,T]\}$ is an $m$-dimensional standard Brownian motion independent of $\{W_t,t\in[0,T]\}$.
 \end{lemm}

\begin{proof}
	We estimate the four terms $\{I_i^{N,\theta}(t),i=1,2,3,4\}$ separately.
	
	\textbf{Estimate of $I_1^{N,\theta}(t)$}.
	Since $NT^{N,\theta}_s=T\big(\frac{Ns}{T}-\lfloor\frac{Ns}{T}\rfloor\big)-\theta T$, we have 
\begin{align*}
	I_1^{N,\theta}(t)
	&=T\int_{0}^{t}Db(X^{N,\theta}_s)b(X^{N,\theta}_s)\Big\{\frac{Ns}{T}-\lfloor\frac{Ns}{T}\rfloor\Big\}\dd s-\theta T\int_{0}^{t}Db(X^{N,\theta}_s)b(X^{N,\theta}_s)\dd s.
\end{align*}
Due to the fact that
 $\lim_{N\rightarrow\infty}\mathbb{E}\int_{0}^{t}\|Db(X^{N,\theta}_s)-Db(X_s)\|^2\dd s=0$ (see Remark \ref{rema2}), applying \cite[Proposition 4.2]{hong2024asymptoticerrordistributionaccelerated} yields that $\lim_{N\rightarrow\infty}I_1^{N,\theta}(t)=I_1^\theta(t)$ in $\mathbb{L}^2(\Omega;\mathbb{R}^{2d})$.
 
\textbf{Estimate of $I_2^{N,\theta}(t)$}.
By \eqref{TandS} and the stochastic Fubini theorem, it holds that
\begin{align*}
	I_2^{N,\theta}(t)&=(1-\theta)N\int_{0}^{t}\int_{L(s)}^{s}Db(X^{N,\theta}_{L(s)})\sigma\dd W_r\dd s-\theta N\int_{0}^{t}\int_{s}^{R(s)}Db(X^{N,\theta}_{L(s)})\sigma\dd W_r\dd s\\
	&=\widehat{I}_2^{N,\theta}(t)+\widetilde{I}_2^{N,\theta}(t),
\end{align*}	
for any $t\in[0,T]$, where	
\begin{align*}
	\widehat{I}_2^{N,\theta}(t):=(1-\theta)N\int_{0}^{t}Db(X^{N,\theta}_{L(r)})\sigma(R(r)-r)\dd W_r-\theta N\int_{0}^{t}Db(X^{N,\theta}_{L(r)})\sigma(r-L(r))\dd W_r,\\
	\widetilde{I}_2^{N,\theta}(t):=-(1-\theta)N\int_{L(t)}^{t}Db(X^{N,\theta}_{L(r)})\sigma(R(t)-t)\dd W_r-\theta N\int_{t}^{R(t)}Db(X^{N,\theta}_{L(r)})\sigma(t-L(t))\dd W_r.
\end{align*}
Denoting by $\widehat{I}_{2}^{N,\theta,k}(t)$ the $k$th component of $\widehat{I}_2^{N,\theta}(t)$, we have
\begin{align*}
	\widehat{I}_{2}^{N,\theta,k}(t)=T\sum_{i=1}^{n}\sum_{j=1}^{m}\sigma_{i,j}\int_{0}^{t}\partial_ib^k(X^{N,\theta}_{L(s)})\Big\{\lfloor\frac{Ns}{T}\rfloor-\frac{Ns}{T}+1-\theta\Big\}\dd W_s^j,\quad k=1,\ldots,n.
\end{align*}
For $\nu\in\{1,\ldots,m\}$ and $\mu\in\{1,\ldots,n\}$, the cross variation process between $\widehat{I}_{2}^{N,\theta,\mu}$ and $W^\nu$ is
\begin{align*}
	\langle \widehat{I}_{2}^{N,\theta,\mu},W^\nu\rangle_t
	&=\sum_{i=1}^{n}\int_{0}^{t} \partial_ib^\mu(X^{N,\theta}_{L(s)})\sigma_{i,\nu}T\Big\{\lfloor\frac{Ns}{T}\rfloor-\frac{Ns}{T}+1-\theta\Big\}\dd s,
\end{align*}
which combined with \cite[Proposition 4.2]{hong2024asymptoticerrordistributionaccelerated} 
 and Remark \ref{rema2} leads to
\begin{align}\label{qww}
	\lim_{N\rightarrow\infty}\langle \widehat{I}_{2}^{N,\theta,\mu},W^\nu\rangle_t=\frac{(1-2\theta)T}{2}\int_{0}^{t}\sum_{i=1}^{n}\partial_ib^\mu(X_s)\sigma_{i,\nu}\dd s \quad\text{in }\mathbb{L}^2(\Omega;\mathbb{R}).
\end{align}
Similarly, it holds that for any $\mu,\widetilde{\mu}\in\{1,\ldots,n\}$,
\begin{equation}\label{wjz}
\lim_{N\rightarrow\infty}\langle \widehat{I}_{2}^{N,\theta,\mu},\widehat{I}_{2}^{N,\theta,\widetilde{\mu}}\rangle_t
	=\frac{\theta^3+(1-\theta)^3}{3}T^2\sum_{i_1,i_2=1}^{n}\sum_{j=1}^{m}\int_{0}^{t}\partial_{i_1}b^\mu(X_s)\partial_{i_2}b^{\widetilde{\mu}}(X_s)\sigma_{i_1,j}\sigma_{i_2,j}\dd s.
\end{equation}
Based on \eqref{qww} and \eqref{wjz}, applying \cite[Theorem 4-1, Proposition 1-4]{Jacod1997} produces that
$\widehat{I}_2^{N,\theta}\stackrel{d}{\Rightarrow}I_2^\theta$ in $\mathbb{C}([0,T];\mathbb{R}^{2d})$. This, together with the fact that
$\widetilde{I}_2^{N,\theta}$ converges to $0$ in $L^2(\Omega;\mathbb{C}([0,T];\mathbb{R}^{2d}))$, shows that $I_2^{N,\theta}\stackrel{d}{\Rightarrow}I_2^\theta$ in $\mathbb{C}([0,T];\mathbb{R}^{2d})$.

\textbf{Estimate of $I_3^{N,\theta}(t)$}.
In view of \eqref{TandS}, we divide
	$I_3^{N,\theta}(t)
	=I^{N,\theta}_{3,1}(t)+I^{N,\theta}_{3,2}(t)$,
where
\begin{gather*}
	I^{N,\theta}_{3,1}(t):=(1-\theta)N\int_{0}^{t}D^2b(X^{N,\theta}_{L(s)})\big(\sigma(W_s-W_{L(s)}),\sigma(W_s-W_{L(s)})\big)\dd s,\\
	I^{N,\theta}_{3,2}(t):=-\theta N\int_{0}^{t}D^2b(X^{N,\theta}_{L(s)})\big(\sigma(W_s-W_{L(s)}),\sigma(W_{R(s)}-W_s)\big)\dd s.
\end{gather*}
Notice that
\begin{align*}
	&\quad I_{3,1}^{N,\theta}(t)=(1-\theta)\sum_{i_1,j_1=1}^{n}\sum_{k=1}^{m}N\int_{0}^{t}\partial_{i_1,j_1}b(X^{N,\theta}_{L(s)})\sigma_{i_1,k}\sigma_{j_1,k}(W_s^{k}-W^{k}_{L(s)})^2\dd s\\
	&\quad+(1-\theta)\sum_{i_1,j_1=1}^{n}\sum_{\substack{i_2,j_2=1\\i_2\neq j_2}}^{m}N\int_{0}^{t}\partial_{i_1,j_1}b(X^{N,\theta}_{L(s)})\sigma_{i_1,i_2}\sigma_{j_1,j_2}(W_s^{i_2}-W^{i_2}_{L(s)})(W^{j_2}_s-W^{j_2}_{L(s)})\dd s\\
	&=:\widetilde{I}_{3,1}^{N,\theta}(t)+\widehat{I}_{3,1}^{N,\theta}(t).
\end{align*}
By the relation $\dd(W_t^iW_t^j)=W_t^i\dd W_t^j+W_t^j\dd W_t^i+\delta_{i,j}\dd t$ for $i,j\in\{1,\ldots,m\}$ and the stochastic Fubini theorem,
we obtain 
\begin{align*}
	\widetilde{I}_{3,1}^{N,\theta}(t)&=2(1-\theta)N\sum_{i_1,j_1=1}^{n}\sum_{k=1}^{m}\int_{0}^{t}\partial_{i_1,j_1}b(X^{N,\theta}_{L(r)})\sigma_{i_1,k}\sigma_{j_1,k}(W_r^k-W^k_{L(r)})(R(r)\wedge t-r)\dd W_r^k\\
	&\quad+(1-\theta)T\sum_{i_1,j_1=1}^{n}\sum_{k=1}^{m}\int_{0}^{t}\partial_{i_1,j_1}b(X^{N,\theta}_{L(s)})\sigma_{i_1,k}\sigma_{j_1,k}\Big(\frac{Ns}{T}-\lfloor\frac{Ns}{T}\rfloor\Big)\dd s\\
	&=:I_{3,1,1}^{N,\theta}(t)+I_{3,1,2}^{N,\theta}(t)
\end{align*}
and
\begin{align*}
	&\quad\widehat{I}_{3,1}^{N,\theta}(t)\\&=(1-\theta)N\sum_{i_1,j_1=1}^{n}\sum_{\substack{i_2,j_2=1\\i_2\neq j_2}}^{m}\int_{0}^{t}\partial_{i_1,j_1}b(X^{N,\theta}_{L(r)})\sigma_{i_1,i_2}\sigma_{j_1,j_2}(R(r)\wedge t-r)(W_r^{i_2}-W_{L(r)}^{i_2})\dd W_r^{j_2}\\
	&\quad+(1-\theta)N\sum_{i_1,j_1=1}^{n}\sum_{\substack{i_2,j_2=1\\i_2\neq j_2}}^{m}\int_{0}^{t}\partial_{i_1,j_1}b(X^{N,\theta}_{L(r)})\sigma_{i_1,i_2}\sigma_{j_1,j_2}(R(r)\wedge t-r)(W_r^{j_2}-W_{L(r)}^{j_2})\dd W_r^{i_2}\\
	&=:I_{3,1,3}^{N,\theta}(t)+I_{3,1,4}^{N,\theta}(t).
\end{align*}
Analogous to the estimate of $I_1^{N,\theta}(t)$, it can be shown that 
\begin{align*}
	\lim_{N\rightarrow\infty}I_{3,1,2}^{N,\theta}(t)=\frac{1-\theta}{2}T\sum_{k=1}^{m}\int_{0}^{t}D^2b(X(s))(\sigma_{\cdot,k},\sigma_{\cdot,k})\dd s=I_{3}^{\theta}(t)\quad\text{ in }\mathbb{L}^2(\Omega;\mathbb{R}^{2d}).
\end{align*}
 Moreover, $\lim_{N\rightarrow\infty}I_{3,1,i}^{N,\theta}=0$ in $\mathbb{L}^2(\Omega;\mathbb{C}([0,T];\mathbb{R}^{2d}))$ for $i\in\{1,3,4\}$.
Hence, for any fixed $t\in[0,T]$,
\begin{align}\label{eq:I3t}
	\lim_{N\rightarrow\infty}I_{3,1}^{N,\theta}(t)=I_{3}^{\theta}(t)\quad\text{ in }\mathbb{L}^2(\Omega;\mathbb{R}^{2d}).
\end{align}
By the stochastic Fubini theorem,
\begin{align*}
	I_{3,2}^{N,\theta}(t)
	&=-\theta N\sum_{i_1,i_2=1}^{n}\sum_{i_2,j_2=1}^{m}\int_{0}^{t}\int_{L(r)}^{r}\partial_{i_1,j_1}b(X^{N,\theta}_{L(r)})\sigma_{i_1,i_2}\sigma_{j_1,j_2}\big(W_s^{i_2}-W_{L(r)}^{i_2}\big)\dd s\dd W_r^{j_2}\\
	&\quad-\theta N\sum_{i_1,i_2=1}^{n}\sum_{i_2,j_2=1}^{m}\int_{t}^{R(t)}\int_{L(t)}^{t}\partial_{i_1,j_1}b(X^{N,\theta}_{L(t)})\sigma_{i_1,i_2}\sigma_{j_1,j_2}\big(W_s^{i_2}-W_{L(t)}^{i_2}\big)\dd s\dd W_r^{j_2},
\end{align*}
from which we obtain that $\lim_{N\rightarrow\infty}I_{3,2}^{N,\theta}=0$ in $\mathbb{L}^2(\Omega;\mathbb{C}([0,T];\mathbb{R}^{2d}))$. This along with \eqref{eq:I3t} proves the convergence in $\mathbb{L}^2(\Omega;\mathbb{R}^{2d})$ of $I_{3}^{N,\theta}(t)$ to $I_{3}^{\theta}(t)$ for any fixed $t\in[0,T]$.

\textbf{Estimate of $I_4^{N,\theta}(t)$}.
For the term $I_4^{N,\theta}(t)$, we have
\begin{align*}
	I_4^{N,\theta}(t)
	&=-\frac{N}{2}(1-\theta)^2\int_{0}^{t}D^2b(X^{N,\theta}_{L(s)})\big(\sigma(W_s-W_{L(s)}),\sigma(W_s-W_{L(s)})\big)\dd s\\
	&\quad+N(1-\theta)\theta\int_{0}^{t}D^2b(X^{N,\theta}_{L(s)})\big(\sigma(W_s-W_{L(s)}),\sigma(W_{R(s)}-W_s)\big)\dd s\\
	&\quad-\frac{N}{2}\theta^2\int_{0}^{t}D^2b(X^{N,\theta}_{L(s)})\big(\sigma(W_{R(s)}-W_s),\sigma(W_{R(s)}-W_s)\big)\dd s.
\end{align*}
The first term and third term on the right hand side can be estimated similarly to $I_{3,1}^{N,\theta}(t)$; while the second term on the right hand side converges to $0$ (see the estimate of $I_{3,2}^{N,\theta}(t)$). As a result, we derive that
\begin{align*}
	\lim_{N\rightarrow\infty}I_4^{N,\theta}(t)=-\frac{(1-2\theta+2\theta^2)T}{4}\sum_{k=1}^{m}\int_{0}^{t}D^2b(X_s)(\sigma_{\cdot,k},\sigma_{\cdot,k})\dd s\quad \text{in }\mathbb{L}^2(\Omega;\mathbb{R}^{2d}).
\end{align*}
The proof is completed.
\end{proof}

\begin{lemm}\label{rss1}
Let Assumption \ref{ass1} hold and $\theta\in[0,1]$.	Then the sequence $$\{(\widetilde{U}^{N,\theta},I_0^{N,\theta},I_1^{N,\theta},I_2^{N,\theta},\\I_3^{N,\theta},I_4^{N,\theta},W,\widetilde{W},X)\}_{N\geq 1}$$ is tight in $\mathbb{C}([0,T];\mathbb{R}^{2d})^{\otimes9}$.
\end{lemm}
\begin{proof}
	It suffices to show that each component of $\{(\widetilde{U}^{N,\theta},I_0^{N,\theta},I_1^{N,\theta},I_2^{N,\theta},I_3^{N,\theta},I_4^{N,\theta},\\W,\widetilde{W},X)\}_{N\geq 1}$ is tight.
	 By the Gronwall inequality, we can obtain from \eqref{eqfuzhu} that there exists some constant $C$ independent of $N$ such that
	 \begin{align}
	 \sup_{0\leq t\leq T}\|\widetilde{U}^{N,\theta}_r\|_{\mathbb{L}^2(\Omega;\mathbb{R}^{2d})}\leq C.
	 \end{align}	 
Thus, it holds that for any $T\ge t\ge s\ge0$,
	\begin{align*}
		\|I_0^{N,\theta}(t)-I_0^{N,\theta}(s)\|_{\mathbb{L}^2(\Omega;\mathbb{R}^{2d})}
		&\leq \int_{s}^{t}\Big\|Db(X_r)\widetilde{U}^{N,\theta}_r\Big\|_{\mathbb{L}^2(\Omega;\mathbb{R}^{2d})}\dd r
		\leq C(t-s).
	\end{align*}
	By the Kolmogorov continuity theorem (see \cite[Chapter I, Theorem 2.1]{Daniel1999}), we have
	\begin{align}\label{eq:Kol}
		\sup_{N\geq 1}\Big\|\sup_{t\neq s}\frac{\|I_0^{N,\theta}(t)-I_0^{N,\theta}(s)\|}{|t-s|^{\frac{1}{4}}}\Big\|_{\mathbb{L}^2(\Omega;\mathbb{R})}\leq C.
	\end{align}
	Since $I^{N,\theta}_0=0$, the sequence $\{I_0^{N,\theta}\}_{N\ge1}$ is uniformly bounded in $\mathbb{L}^2(\Omega;\mathbb{C}([0,T];\mathbb{R}^{2d}))$,
	which in combination with \eqref{eq:Kol} implies that
	\begin{align*}
		\sup_{N\geq 1}\mathbb{E}\Big[\|I_0^{N,\theta}\|_{\mathbb{C}^{1/4}([0,T];\mathbb{R}^{2d})}\Big]\leq C.
	\end{align*}
	For $L>0$, denote $B_L:=\{f\in\mathbb{C}([0,T];\mathbb{R}^{2d}):\|f\|_{\mathbb{C}^{1/4}([0,T];\mathbb{R}^{2d})}\leq L\}$, which is pre-compact set of $\mathbb{C}([0,T];\mathbb{R}^{2d})$. Then by the Markov inequality,
	\begin{align*}
		\mathbb{P}(I_0^{N,\theta}\notin \overline{B_L})\leq \mathbb{P}(I_0^{N,\theta}\notin B_L)\leq \frac{1}{L}\mathbb{E}\big[\|I_0^{N,\theta}\|_{\mathbb{C}^{1/4}([0,T];\mathbb{R}^{2d})}\big]\leq \frac{C}{L}\rightarrow0,\quad\text{as }L\rightarrow\infty,
	\end{align*}
which yields the tightness of $\{I_0^{N,\theta}\}_{N\geq 1}$ in $\mathbb{C}([0,T];\mathbb{R}^{2d})$. Similarly, one can prove the tightness of $\{I_i^{N,\theta}\}_{N\geq 1}$ in $\mathbb{C}([0,T];\mathbb{R}^{2d})$ by verifying $\|I_i^{N,\theta}(t)-I_i^{N,\theta}(s)\|_{\mathbb{L}^2(\Omega;\mathbb{R}^{2d})}\leq C|t-s|$ for $i=1,3,4$. By Lemma \ref{rss}, $I_2^{N,\theta}\stackrel{d}{\Rightarrow}I_2^\theta$ in $\mathbb{C}([0,T];\mathbb{R}^{2d})$ as $N\rightarrow\infty$, which implies the tightness of $\{I_2^{N,\theta}\}_{N\geq 1}$ in $\mathbb{C}([0,T];\mathbb{R}^{2d})$ due to Prokhorov's theorem (see \cite[Theorem 13.29]{KlenkeAchim2020}).
	
	By the Burkholder--Davis--Gundy inequality, for any $p\geq 1$,
	\begin{align*}
		\|X_t-X_s\|_{\mathbb{L}^p(\Omega;\mathbb{R}^{2d})}+\|W_t-W_s\|_{\mathbb{L}^p(\Omega;\mathbb{R}^{2d})}+\|\widetilde{W}_t-\widetilde{W}_s\|_{\mathbb{L}^p(\Omega;\mathbb{R}^{2d})}\leq C|t-s|^{1/2}.
	\end{align*}
	Following the argument for the tightness of $\{I_0^{N,\theta}\}_{N\geq1 }$, we obtain the tightness of $X$, $W$ and $\widetilde{W}$ in $\mathbb{C}([0,T];\mathbb{R}^{2d})$. Thus the proof is finished.
\end{proof}

Combining Lemmas \ref{dzm1}, \ref{rss}, and \ref{rss1}, we can conclude the asymptotic error distribution of the $\theta$ method \eqref{stochastic theta scheme} for \eqref{SHE}.
\begin{prop}\label{smz}
	Let Assumption \ref{ass1} hold and $\theta\in[0,1]$. Then for any $t\in[0,T]$, $U^{N,\theta}_t$ converges to $U_t^\theta$ in the sense of distribution. Here, $U_t^\theta$ satisfies the following equation
	\begin{align*}
		U_t^\theta=&\int_{0}^{t}Db(X_s)U_s^\theta\dd s+\frac{(1-2\theta)T}{2}\int_{0}^{t}Db(X_s)b(X_s)\dd s\\
		&+\frac{(1-2\theta)T}{2}\int_{0}^{t}Db(X_s)\sigma\dd W_s+\frac{\sqrt 3}{6}T\int_{0}^{t}Db(X_s)\sigma\dd\widetilde{W}_s\\
		&+\frac{(1-2\theta^2)T}{4}\sum_{k=1}^{m}\int_{0}^{t}D^2b(X_s)(\sigma_{\cdot,k},\sigma_{\cdot,k})\dd s,
	\end{align*}
	where $\{\widetilde{W}_t,t\in[0,T]\}$ is an $m$-dimensional standard Brownian motion independent of $\{W_t,t\in[0,T]\}$.
\end{prop}
\begin{proof}
	The proof is similar to that of \cite[Lemma 3.6]{hong2024asymptoticerrordistributionaccelerated}, and thus is omitted.
\end{proof}

Proposition \ref{smz} also holds for general stochastic differential equations with constant diffusion term $\sigma$. It suggests that the strong convergence order $1$ is sharp for the $\theta$ method applied to nonlinear stochastic differential equations with additive noise.

\begin{theo}\label{daa}
Let Assumption \ref{ass1} hold and $\theta\in[0,1]$.
Assume that there exists $R>0$ such that the Hessian matrix $\nabla^2 H$ is uniformly positive definite for all $x\in\mathbb{R}^{2d}$ with $\|x\|>R$. Then there exist some constants $C_i$, $i=0,1,2,3$ independent of $\theta$ and $T$ such that
for any $t\in[0,T]$,
\begin{align}\label{eq:thm}
		\mathbb{E}\left[\|U_t^\theta\|^2\right]&\le f_\theta(T):=e^{C_0t}\left(C_1(1-2\theta)^2T^4+C_2(\theta^2-\theta+\frac13) T^3+C_3(1-2\theta^2)^2T^2\right).
	\end{align}
\end{theo}
\begin{proof}
In this proof, we denote by $\textup{K}_1, \textup{K}_2,\ldots,$ the generic constants that may dependent on $H$ and $\sigma$, but independent of $\theta$ and $T$.
Notice that $\widehat{W}:=(\theta^2-\theta+\frac13)^{-\frac12}(\frac{(1-2\theta)}{2}W+\frac{\sqrt 3}{6}\widetilde{W})$ is a standard Brownian motion with respect to the filtration generated by $W$ and $\widetilde{W}$, and
\begin{align*}
\frac{(1-2\theta)T}{2}\int_{0}^{t}Db(X_s)\sigma\dd W_s+\frac{\sqrt 3}{6}T\int_{0}^{t}Db(X_s)\sigma\dd\widetilde{W}_s\\
=\sqrt{\theta^2-\theta+\frac13}T\int_{0}^{t}Db(X_s)\sigma\dd \widehat{W}_s.
\end{align*}
Then by the It\^o formula and Proposition \ref{smz}, we have
	\begin{align*}
		\frac12\mathbb{E}\left[\|U_t^\theta\|^2\right]&=\mathbb{E}\int_{0}^{t}(U_s^\theta)^\top Db(X_s)U_s^\theta\dd s+\frac{(1-2\theta)T}{2}\mathbb{E}\int_{0}^{t}(U_s^\theta)^\top Db(X_s)b(X_s)\dd s\\
		&+\frac{(1-2\theta^2)T}{4}\sum_{k=1}^{m}\mathbb{E}\int_{0}^{t}(U_s^\theta)^\top D^2b(X_s)(\sigma_{\cdot,k},\sigma_{\cdot,k})\dd s\\
		&+(\theta^2-\theta+\frac13)T^2\mathbb{E}\int_{0}^{t}\textup{tr}(Db(X_s)\sigma\sigma^\top Db(X_s)^\top)\dd s.
	\end{align*}
	By the Young inequality and Assumption \ref{ass1}, for any $t\in[0,T]$,
		\begin{align}\label{eq:Utheta}\notag
		\mathbb{E}\left[\|U_t^\theta\|^2\right]&\le \textup{K}_0\int_{0}^{t}\mathbb{E}\left[\|U_s^\theta\|^2\right]\dd s+\textup{K}_1(1-2\theta)^2T^2\mathbb{E}\int_{0}^{t}\|b(X_s)\|^2\dd s\\
		&\quad+\textup{K}_2(1-2\theta^2)^2T^2+\textup{K}_3(\theta^2-\theta+\frac13) T^3.
	\end{align}
Utilizing the It\^o formula again as well as the anti-symmetry of $J$, for any $t\in[0,T]$,
$$\mathbb{E}\left[H(X_t)\right]=\frac12\int_0^t\textup{tr}(\nabla^2 H(X_s)\sigma\sigma^\top)\dd s\le \textup{K}_4t.$$
		In view of the assumption of $H$, there exists some $a_1, a_2>0$ for any $x\in\mathbb{R}^{2d}$,
		$$H(x)= H(0)+\nabla H(0)\cdot x+\frac{1}{2}\int_0^1(1-\eta) x^\top\nabla^2 H(\eta x)x\dd \eta\ge a_1\|x\|^2-a_2.$$
		 The linear growth of $b$ implies that for any $t\in[0,T]$,
		$$\mathbb{E}\left[\|b(X_t)\|^2\right]\le \textup{K}_5(1+\mathbb{E}\left[\|X_t\|^2\right])\le \textup{K}_5\Big(1+\frac{\textup{K}_4 t+a_2}{a_1}\Big).$$
		Plugging the above inequality into \eqref{eq:Utheta}, we obtain \eqref{eq:thm} from the Gronwall inequality.
\end{proof}

According to \eqref{eq:thm}, for any fixed sufficiently large $T>0$, 
\begin{equation*}
f_\frac12(T)=\min_{\theta\in[0,1]} f_\theta(T).
\end{equation*}
This implies that in the long-time computation, the error constant of the midpoint method may be smaller than that for $\theta$ method with $\theta\neq \frac12$.

\section{Error distribution of numerical methods for test equation}\label{chap3}

In this section, by taking the linear stochastic oscillator as the test equation, we study the asymptotic error distribution of several numerical methods.  We try to explain the
superiority of symplectic methods from the perspective of the asymptotic error
distribution.
In the sequel, let $R=\mathcal{O}(h^p),p\geq 0$, stand for $|R|\leq Ch^p$ for all sufficiently small $h>0$, where $C$ is independent of $h$. Let $K\sim T^p,p\geq 0$, stand for $\lim_{T\rightarrow \infty}K/T^p=C$, where $C$ is independent of $T$. We denote $\mathcal{N}(\mathbf{m},\Sigma)$ the normal distribution with mean $\mathbf{m}\in\mathbb{R}$ and variance $\Sigma> 0$ . 

By introducing $X^2_t:=\dot{X}^1_t$ and $X_t:=(X_t^1,X_t^2)^\top$, the linear stochastic oscillator
$\ddot{X}^1_t+X^1_t=\alpha \dot{W}_t$ ($\alpha>0$) can be rewritten as
\begin{align}\label{model_zz}
	\dd X_t
	=\begin{pmatrix}
	0&1\\
	-1&0	
\end{pmatrix}X_t\dd t+\alpha \begin{pmatrix}
	0\\
	1	
\end{pmatrix}\dd W_t,\quad t\in[0,T]
\end{align}
with initial data $X_0:=(X^1_0,X^2_0)^\top$.
Here, $W=\{W_t\}_{t\in[0,T]}$ denotes a one-dimensional standard Brownian motion defined on $(\Omega,\mathcal{F},\{\mathcal{F}_t\}_{t\geq 0},\mathbb{P})$. 
\begin{rema}
	The exact solution of \eqref{model_zz} is given by (see e.g., \cite{SAHD2004}) 
	\begin{equation}\label{exact_solution}
		\left\{ \begin{split}
			&X^1_t=X^1_0\cos t+X^2_0\sin t+\alpha\int_{0}^{t}\sin(t-s)\dd W_s,\\
			&X^2_t=-X^1_0\sin t+X^2_0\cos t+\alpha\int_{0}^{t}\cos(t-s)\dd W_s.
		\end{split}\right.
	\end{equation}
	Moreover, the symplectic structure of its phase flow is preserved (see, e.g., \cite{review2015}), i.e.,
	\begin{align*}
		\dd X^1_t\wedge\dd X^2_t=\dd X^1_0\wedge\dd X^2_0,\quad\forall\,t\geq0.
	\end{align*}
\end{rema}

We consider a general convergent numerical method for \eqref{model_zz} of the form
\begin{align}\label{general_numerical_zz}
	\begin{pmatrix}
		\widehat{X}_{k+1,1}^N \\
	\widehat{X}_{k+1,2}^N \end{pmatrix}
	=A\begin{pmatrix}
		\widehat{X}_{k,1}^N\\
		\widehat{X}_{k,2}^N \end{pmatrix}+\alpha b\Delta W_k:=\begin{pmatrix}
		a_{11}&a_{12}\\
		a_{21}&a_{22}
	\end{pmatrix} \begin{pmatrix}
		\widehat{X}_{k,1}^N\\
	\widehat{X}_{k,2}^N
	\end{pmatrix}+\alpha\begin{pmatrix}
		b_1\\b_2
	\end{pmatrix}\Delta W_k,
\end{align}
where $\Delta W_k:=W_{(k+1)h}-W_{kh}, k=0,1,\cdots,N-1$. 	

\begin{rema}
	The numerical method \eqref{general_numerical_zz} has first-order convergence of accuracy for \eqref{model_zz} if (see \cite[Theorem 4.1]{suijizhenzidapiancha})
	\begin{equation}\label{mzn3}
		|a_{11}-1|+|a_{22}-1|+|a_{12}-h|+|a_{21}+h|=\mathcal{O}(h^2),\quad|b_1|+|b_2-1|=\mathcal{O}(h).
	\end{equation} 
\end{rema}

A numerical method $\{(\widehat{X}_{k,1}^N,
\widehat{X}_{k,2}^N)\}_{k=0}^N$ for \eqref{model_zz} is called symplectic if 
\begin{align*}
	\dd \widehat{X}_{k+1,1}^N\wedge\dd \widehat{X}_{k+1,2}^N=\dd \widehat{X}_{k,1}^N\wedge\dd \widehat{X}_{k,2}^N,\qquad\forall \,k\in\{0,\ldots,N-1\}.
\end{align*}
Since $\{X^2_t\}_{t\in [0,T]}$ is the derivative of $\{X^1_t\}_{t\in[0,T]}$ and many physical observations (e.g., the mean position $\frac{1}{T}\int_{0}^{T}X^1_t\dd t$ and the mean velocity $\frac{X^1_T}{T}$) of \eqref{model_zz} depends on $\{X^1_t\}_{t\in[0,T] }$, we mainly consider the error $e_N:=\widehat{X}^N_{N,1}-X_T^1$. In terms of the numerical method \eqref{general_numerical_zz}, it follows from \cite{review2015} that 
\begin{equation}\label{numerical_solution}
	\begin{split}
	\widehat{X}^N_{N,1}&=(a_{11}\widehat{\alpha}_{N-1}+\widehat{\beta}_{N-1})X^1_0+a_{12}\widehat{\alpha}_{N-1}X^2_0\\
	&\quad+\alpha\sum_{k=0}^{N-1}\left(b_1\widehat{\alpha}_{N-1-k}+\gamma\widehat{\alpha}_{N-2-k}\right)\Delta W_k,
		\end{split}
\end{equation}
where $\gamma=a_{12}b_2-a_{22}b_1$,
	$\widehat{\alpha}_k=\left(\det(A)\right)^{\frac{k}{2}}\frac{\sin((k+1)\xi)}{\sin(\xi)}$, $\widehat{\beta}_k=-\left(\det(A)\right)^{\frac{k+1}{2}}\frac{\sin\left(k\xi\right)}{\sin(\xi)}$
for any integer $k$, with $\xi\in(0,\pi)$ satisfying
\begin{align}\label{jiaodu}
	\cos(\xi)=\frac{{\rm tr}(A)}{2\sqrt{\det(A)}},\qquad\sin(\xi)=\frac{\sqrt{4\det(A)-({\rm tr}(A))^2}}{2\sqrt{\det(A)}}.
\end{align}

Utilizing \eqref{exact_solution} and \eqref{numerical_solution}, it yields that
\begin{equation}\label{mlk}
	\begin{split}
		e_N
		&=[a_{11}\widehat{\alpha}_{N-1}+\widehat{\beta}_{N-1}-\cos(T)]X^1_0+[a_{12}\widehat{\alpha}_{N-1}-\sin(T)]X^2_0\\
		&\quad+\alpha\sum_{j=0}^{N-1}\int_{t_j}^{t_{j+1}}\left(b_1\widehat{\alpha}_{N-1-j}+\gamma\widehat{\alpha}_{N-2-j}-\sin(T-s)\right)\dd W_s.
	\end{split}
\end{equation}

Notice that for the linear problem \eqref{model_zz}, the error $e_N$ is a Gaussian random variable whose variance plays a crucial role in determining its distribution.
Hence, we further study the explicit expressions of the variances of the errors of general symplectic and non-symplectic methods. 

By \eqref{mlk} and It\^o isometry, one gets
	\begin{align*}
		\frac{{\rm Var}(e_N)}{\alpha^2}
		&=\sum_{j=0}^{N-1}\int_{t_j}^{t_{j+1}}\left(b_1\widehat{\alpha}_{N-1-j}+\gamma\widehat{\alpha}_{N-2-j}-\sin\left(T-s\right)\right)^2\dd s\\
		&=\sum_{j=0}^{N-1}\left(b_1\widehat{\alpha}_{N-1-j}+\gamma\widehat{\alpha}_{N-2-j}\right)^2h\\
		&\quad+\sum_{j=0}^{N-1}\left[\frac{h}{2}+\frac{1}{4}\sin\left(2T-2t_{j+1}\right)-\frac{1}{4}\sin\left(2T-2t_j\right)\right]\\
		&\quad-2\sum_{j=0}^{N-1}\left(b_1\widehat{\alpha}_{N-1-j}+\gamma\widehat{\alpha}_{N-2-j}\right)\left[\cos\left(T-t_{j+1}\right)-\cos\left(T-t_j\right)\right]\\
		&=\sum_{k=1}^{4}\mathcal{S}_k,
	\end{align*}
where
	\begin{align*}
		\mathcal{S}_1&:=hb_1^2\sum_{j=0}^{N-1}\widehat{\alpha}^2_{N-1-j}+h\gamma^2\sum_{j=0}^{N-1}\widehat{\alpha}^2_{N-2-j},\\
		\mathcal{S}_2&:=2hb_1\gamma\sum_{j=0}^{N-1}\widehat{\alpha}_{N-1-j}\widehat{\alpha}_{N-2-j},\\
		\mathcal{S}_3&:=-2b_1\sum_{j=0}^{N-1}\widehat{\alpha}_{N-1-j}\left(\cos\left(T-t_{j+1}\right)-\cos\left(T-t_{j}\right)\right),\\
		\mathcal{S}_4&:=2\gamma\sum_{j=0}^{N-1}\sum_{k=0}^{1}\widehat{\alpha}_{N-2-j}(-1)^{k}\cos\left(T-t_{j+k}\right)+\frac{T}{2}-\frac{\sin(2T)}{4}.
	\end{align*}
For the term $\mathcal{S}_1$, it is clear that
	\begin{align*}
		\mathcal{S}_1
		&=\frac{hb_1^2}{\sin^2\left(\xi\right)}\sum_{k=0}^{N-1}\left(\det\left(A\right)\right)^k\sin^2\left(\left(k+1\right)\xi\right)\\
		&\quad+\frac{h\gamma^2}{\sin^2(\xi)}\sum_{k=0}^{N-2}\left(\det(A)\right)^k\sin^2((k+1)\xi)\\
		&=\frac{hb_1^2}{2\sin^2\left(\xi\right)}\sum_{k=0}^{N-1}\left(\det\left(A\right)\right)^k+\frac{h\gamma^2}{2\sin^2(\xi)}\sum_{k=0}^{N-2}\left(\det(A)\right)^k\\
		&\quad-\frac{hb_1^2}{2\sin^2\left(\xi\right)}\sum_{k=1}^{N}\left(\det\left(A\right)\right)^{k-1}\cos\left(2k\xi\right)\\
		&\quad-\frac{h\gamma^2}{2\sin^2(\xi)}\sum_{k=1}^{N-1}\left(\det(A)\right)^{k-1}\cos(2k\xi).
	\end{align*}

Similarly, we obtain
	\begin{align*}
		\mathcal{S}_2
		&=\frac{hb_1\gamma}{\sin(\xi)\tan(\xi)}\sum_{k=0}^{N-1}\left(\det(A)\right)^{\frac{2k-1}{2}}(1-\cos(2k\xi+2\xi))\\
		&\quad-\frac{hb_1\gamma}{\sin(\xi)\left(\det(A)\right)^{\frac{3}{2}}}\sum_{k=1}^{N}\left(\det(A)\right)^k\sin(2k\xi),\\
		\mathcal{S}_3
		&=-\frac{b_1}{\tan(\xi)}\sum_{k=1}^{N-1}\sum_{i=1}^{2}\left(\det(A)\right)^{\frac{k}{2}}\sin(k\xi+(-1)^ikh)\\
		&\quad-2b_1-b_1\sum_{k=1}^{N-1}\sum_{i=1}^{2}\left(\det(A)\right)^{\frac{k}{2}}\cos(k\xi+(-1)^ikh)\\
		&\quad+\frac{b_1}{\sin(\xi)}\sum_{k=1}^{N}\sum_{i=1}^{2}\left(\det(A)\right)^{\frac{k-1}{2}}\sin(k\xi+(-1)^ikh),\\
		\mathcal{S}_4
		&=\frac{\gamma(\cos(h)-1)}{\sin(\xi)}\sum_{k=1}^{N-1}\sum_{i=1}^{2}\left(\det(A)\right)^{\frac{k-1}{2}}\sin(k\xi+(-1)^ikh)\\
		&\quad+\frac{\gamma\sin(h)}{\sin(\xi)}\sum_{k=1}^{N-1}\sum_{i=1}^{2}(-1)^i\left(\det(A)\right)^{\frac{k-1}{2}}\cos(k\xi+(-1)^ikh)\\
		&\quad+\frac{T}{2}-\frac{\sin(2T)}{4}.
	\end{align*}
Further, we sum up $\mathcal{S}_1$-$\mathcal{S}_4$ to obtain
	\begin{align}\label{eq:vareN}
	\frac{{\rm Var}(e_N)}{\alpha^2}
	&=\frac{\gamma(\cos(h)-1)}{\sin(\xi)}\sum_{k=1}^{N-1}\sum_{i=1}^{2}\left(\det(A)\right)^\frac{k-1}{2}\sin(k\xi+(-1)^ikh)\notag\\
	&\quad-\frac{b_1}{\tan(\xi)}\sum_{k=1}^{N-1}\sum_{i=1}^{2}\left(\det(A)\right)^\frac{k}{2}\sin(k\xi+(-1)^ikh)+\frac{T}{2}\notag\\
	&\quad+\frac{\gamma\sin(h)}{\sin(\xi)}\sum_{k=1}^{N-1}\sum_{i=0}^{1}\left(\det(A)\right)^\frac{k-1}{2}(-1)^i\cos(k\xi+(-1)^ikh)\notag\\
	&\quad-b_1\sum_{k=1}^{N-1}\sum_{i=0}^{1}\left(\det(A)\right)^\frac{k}{2}\cos(k\xi+(-1)^ikh)\notag\\
	&\quad+\frac{b_1}{\sin(\xi)}\sum_{k=1}^{N}\sum_{i=1}^{2}\left(\det(A)\right)^\frac{k-1}{2}\sin(k\xi+(-1)^ikh)\\
	&\quad+\frac{h b_1^2}{2\sin^2\left(\xi\right)}\sum_{k=0}^{N-1}\left(\det(A)\right)^k(1-\cos(2k\xi+2\xi))\notag\\
	&\quad+\frac{hb_1\gamma}{\sin(\xi)\tan(\xi)}\sum_{k=0}^{N-1}\left(\det(A)\right)^{\frac{2k-1}{2}}(1-\cos(2k\xi+2\xi))\notag\\
	&\quad+\frac{h\gamma^2}{2\sin^2(\xi)}\sum_{k=0}^{N-2}\left(\det(A)\right)^k(1-\cos(2k\xi+2\xi))-2b_1\notag\\
	&\quad-\frac{hb_1\gamma}{\sin(\xi)}\sum_{k=1}^{N}\left(\det(A)\right)^{\frac{2k-3}{2}}\sin(2k\xi)-\frac{\sin(2T)}{4}.\notag
\end{align}

To proceed, we need the following fundamental lemma.
\begin{lemm}\label{sum_lemma}
	For arbitrary $\xi\in(0,2\pi),n\in\mathbb{N}^+$, and $a\in\mathbb{R}$, we have the followings.\\
	{\rm (1)}	\begin{align*}
		\sum_{k=1}^{n}\sin(k\xi)a^k=\frac{a\sin(\xi)-a^{n+1}\sin((n+1)\xi)+a^{n+2}\sin(n\xi)}{1-2a\cos(\xi)+a^2}.
	\end{align*}
	In particular, if $a=1$, then
	\begin{align*}
		\sum_{k=1}^{n}\sin(k\xi)=\frac{\cos(\frac{\xi}{2})-\cos((n+\frac{1}{2})\xi)}{2\sin(\frac{\xi}{2})}.
	\end{align*}
	{\rm (2)} 	\begin{align*}
		\sum_{k=1}^{n}\cos\left(k\xi\right)a^k=\frac{a\cos\left(\xi\right)-a^2-a^{n+1}\cos\left(\left(n+1\right)\xi\right)+a^{n+2}\cos\left(n\xi\right)}{1-2a\cos\left(\xi\right)+a^2}.	
	\end{align*}
	In particular, if $a=1$, then
	\begin{align*}
		\sum_{k=1}^{n}\cos(k\xi)=\frac{\sin((n+\frac{1}{2})\xi)}{2\sin(\frac{\xi}{2})}-\frac{1}{2}.
	\end{align*}
\end{lemm}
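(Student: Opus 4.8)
The plan is to treat both identities at once by passing to the complex exponential. Setting $z:=ae^{i\xi}$, I observe that
\begin{align*}
\sum_{k=1}^{n}\cos(k\xi)a^{k}+i\sum_{k=1}^{n}\sin(k\xi)a^{k}=\sum_{k=1}^{n}z^{k},
\end{align*}
so the two real sums in the lemma are precisely the real and imaginary parts of a single geometric sum. Provided $z\neq 1$ (which, for $\xi\in(0,2\pi)$ and real $a$, fails only in the single degenerate pair $(\xi,a)=(\pi,-1)$, where the stated denominator $1-2a\cos(\xi)+a^{2}$ also vanishes), the geometric series formula gives $\sum_{k=1}^{n}z^{k}=\frac{z-z^{n+1}}{1-z}$.

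Next I would rationalize by multiplying numerator and denominator by $1-\bar z$. The key observation is that the resulting real denominator is exactly the one appearing in the lemma:
\begin{align*}
|1-z|^{2}=(1-z)(1-\bar z)=1-2\,\mathrm{Re}(z)+|z|^{2}=1-2a\cos(\xi)+a^{2}.
\end{align*}
For the numerator, using $z\bar z=a^{2}$ one finds $(z-z^{n+1})(1-\bar z)=z-a^{2}-z^{n+1}+a^{2}z^{n}$. Substituting $z^{m}=a^{m}(\cos(m\xi)+i\sin(m\xi))$ and reading off real and imaginary parts then yields both closed forms simultaneously: the real part contributes $a\cos(\xi)-a^{2}-a^{n+1}\cos((n+1)\xi)+a^{n+2}\cos(n\xi)$, and the imaginary part contributes $a\sin(\xi)-a^{n+1}\sin((n+1)\xi)+a^{n+2}\sin(n\xi)$, which after dividing by the denominator above are exactly statements (2) and (1).

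Finally, for the special cases $a=1$ I would simplify using half-angle and sum-to-product identities. The denominator collapses to $1-2\cos(\xi)+1=4\sin^{2}(\xi/2)$, while the numerators are handled via $\cos(\xi)-1=-2\sin^{2}(\xi/2)$ and $\sin(\xi)=2\sin(\xi/2)\cos(\xi/2)$, together with $\sin(n\xi)-\sin((n+1)\xi)=-2\cos((n+\tfrac12)\xi)\sin(\xi/2)$ and $\cos(n\xi)-\cos((n+1)\xi)=2\sin((n+\tfrac12)\xi)\sin(\xi/2)$. After cancelling a common factor of $2\sin(\xi/2)$ one recovers $\frac{\cos(\xi/2)-\cos((n+\tfrac12)\xi)}{2\sin(\xi/2)}$ and $\frac{\sin((n+\tfrac12)\xi)}{2\sin(\xi/2)}-\frac12$, respectively.

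I do not anticipate a genuine obstacle here: the argument is essentially the standard Dirichlet-kernel computation, and the only point requiring care is bookkeeping, namely keeping track of which terms carry $a^{n+1}$ versus $a^{n+2}$ after multiplying by $1-\bar z$, and noting the lone degenerate pair $(\xi,a)=(\pi,-1)$ where $z=1$ and both sides are undefined. Everything else is routine trigonometric simplification.
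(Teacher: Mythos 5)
Your proposal is correct and takes essentially the same route as the paper: the paper also reduces part (2) to geometric sums of $ae^{\pm i\xi}$ and then applies half-angle identities for $a=1$, merely writing the real part as the average of two conjugate geometric series instead of rationalizing a single one by $1-\bar z$. The only difference is that you derive part (1) in the same stroke, whereas the paper cites it from an external reference; one small quibble is that at the degenerate pair $(\xi,a)=(\pi,-1)$ the left-hand sums are still perfectly well defined (they vanish) -- it is only the closed-form right-hand side that breaks down.
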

\begin{proof}
	(1) See \cite[Lemma 3.1]{suijizhenzidapiancha}.\\
	(2) Since $\cos(k\xi)=\big(e^{-\mathbf{i}k\xi}+e^{\mathbf{i}k\xi}\big)/2$, we have
	\begin{align*}
		&\sum_{k=1}^{n}\cos\left(k\xi\right)a^k=\sum_{k=1}^{n}\frac{a^ke^{\mathbf{i}k\xi}+a^ke^{-\mathbf{i}k\xi}}{2}=\frac{1}{2}\sum_{k=1}^{n}\left(ae^{\mathbf{i}\xi}\right)^k+\frac{1}{2}\sum_{k=1}^{n}\left(ae^{-\mathbf{i}\xi}\right)^k\\
		&=\frac{1}{2}\frac{ae^{\mathbf{i}\xi}\left(1-\left(ae^{\mathbf{i}\xi}\right)^n\right)}{1-ae^{\mathbf{i}\xi}}+\frac{1}{2}\frac{ae^{-\mathbf{i}\xi}\left(1-\left(ae^{-\mathbf{i}\xi}\right)^n\right)}{1-ae^{-\mathbf{i}\xi}}\\
		&=\frac{a\left(e^{\mathbf{i}\xi}+e^{-\mathbf{i}\xi}\right)-2a^2-a^{n+1}\left(e^{\mathbf{i}(n+1)\xi}+e^{-\mathbf{i}(n+1)\xi}\right)+a^{n+2}(e^{\mathbf{i}n\xi}+e^{-\mathbf{i}n\xi})}{2\left(1-a\left(e^{-\mathbf{i}\xi}+e^{\mathbf{i}\xi}\right)+a^2\right)}\\
		&=\frac{a\cos(\xi)-a^2-a^{n+1}\cos((n+1)\xi)+a^{n+2}\cos(n\xi)}{1-2a\cos(\xi)+a^2}.
	\end{align*}
	If $a=1$, it yields that
	\begin{align*}
		\sum_{k=1}^{n}\cos(k\xi)&=\frac{\cos(\xi)-1-\cos((n+1)\xi)+\cos(n\xi)}{2-2\cos\left(\xi\right)}\\
		&=\frac{\cos\left(n\xi\right)-\cos\left(\left(n+1\right)\xi\right)}{4\sin^2\left(\frac{\xi}{2}\right)}-\frac{1}{2}=\frac{\sin\left(\left(n+\frac{1}{2}\right)\xi\right)}{2\sin\left(\frac{\xi}{2}\right)}-\frac{1}{2}.
	\end{align*}
	The proof is completed.
\end{proof}

The expression \eqref{eq:vareN} of the variance {\rm Var}($e_N$) depends on $\det(A)$. Recall that \eqref{general_numerical_zz} is a symplectic method if and only if $\det(A)=1$ (see e.g., \cite{review2015}). Hence, we are in the position to simplify the expressions of the variances of the errors for symplectic and non-symplectic methods, separately.
By means of Lemma \ref{sum_lemma},
we simplify ${\rm Var}(e_N)$ further to get the following statements.
\begin{prop}\label{varforsym}
	For the symplectic method with $\xi\neq h$, the variance of the error $e_N$ is given by
	\begin{align}\label{jnm}
	{\rm Var}(e_N)
		&= \frac{\alpha^2b_1^2\left(2T+h\right)}{4\sin^2\left(\xi\right)}
		-\frac{\alpha^2hb_1^2\sin\left(\left(2N+1\right)\xi\right)}{4\sin^3\left(\xi\right)}-\frac{\alpha^2\sin\left(2T\right)}{4}\notag\\
		&\quad+\frac{\alpha^2b_1(Z_3^++Z_3^-)}{\sin\left(\xi\right)}-\frac{\alpha^2h\gamma^2\sin\left(\left(2N-1\right)\xi\right)}{4\sin^3\left(\xi\right)}+\frac{\alpha^2}{2}T\notag\\
		&\quad-\frac{\alpha^2b_1h\gamma\sin\left(2N\xi\right)}{2\sin^3\left(\xi\right)}+\frac{\alpha^2b_1\gamma T}{\sin\left(\xi\right)\tan\left(\xi\right)}+\frac{\alpha^2\gamma^2\left(2T-h\right)}{4\sin^2\left(\xi\right)}\\
		&\quad+\frac{\alpha^2\gamma\sin\left(h\right)}{\sin\left(\xi\right)}(Z_2^{+}-Z_2^{-})-\alpha^2b_1(Z_2^++Z_2^-)\notag\\
		&\quad-\Big(\frac{2\alpha^2\gamma\sin^2\left(\frac{h}{2}\right)}{\sin\left(\xi\right)}+\frac{\alpha^2b_1}{\tan(\xi)}\Big)(Z_1^{+}+Z_1^{-})-\alpha^2b_1,\notag
	\end{align}
	where
	\begin{align*}
		&Z_1^{\pm}=\frac{\cos(\frac{1}{2}\xi\pm \frac{1}{2}h)-\cos((N-\frac{1}{2})(\xi\pm h))}{2\sin(\frac{1}{2}\xi\pm \frac{1}{2}h)},\\ 
		&Z_2^\pm=\frac{\sin((N-\frac{1}{2})(\xi\pm h))}{2\sin(\frac{1}{2}\xi\pm \frac{1}{2}h)},\\
		&Z_3^\pm=\frac{\cos(\frac{1}{2}\xi\pm \frac{1}{2}h)-\cos((N+\frac{1}{2})(\xi\pm h))}{2\sin(\frac{1}{2}\xi\pm \frac{1}{2}h)}.
	\end{align*}
	For the symplectic method with $\xi=h$,
	\begin{equation}\label{hms}
		\begin{split}
		{\rm Var}(e_N)
		&=\frac{\alpha^2b_1^2(2T+h)}{4\sin^2\left(h\right)}+\alpha^2\gamma\Big(\frac{\sin\left(2T-h\right)}{2\sin\left(h\right)}-N+\frac{1}{2}\Big)\\
		&\quad+\frac{\alpha^2\gamma^2(2T-h)}{4\sin^2\left(h\right)}-\frac{\alpha^2h\gamma^2\sin\left(2T-h\right)}{4\sin^3\left(h\right)}-\alpha^2b_1N\\
		&\quad+\frac{\alpha^2b_1\gamma T}{\sin\left(h\right)\tan\left(h\right)}-\frac{\alpha^2b_1h\gamma\sin\left(2T\right)}{2\sin^3\left(h\right)}-\frac{\alpha^2\sin\left(2T\right)}{4}\\
		&\quad-\frac{\alpha^2b_1\left(\cos\left(h\right)-\cos\left(2T-h\right)\right)}{2\tan\left(h\right)\sin\left(h\right)}-\frac{\alpha^2hb_1^2\sin\left(2T+h\right)}{4\sin^3\left(h\right)}\\
		&\quad+\frac{\alpha^2b_1\left(\cos\left(h\right)-\cos\left(2T+h\right)\right)}{2\sin^2\left(h\right)}-\frac{\alpha^2b_1\sin\left(2T-h\right)}{2\sin\left(h\right)}\\
		&\quad-\frac{\alpha^2\gamma\sin^2\left(\frac{h}{2}\right)\left(\cos\left(h\right)-\cos\left(2T-h\right)\right)}{\sin^2\left(h\right)}+\frac{\alpha^2(T-b_1)}{2}.
			\end{split}
	\end{equation}
\end{prop}

\begin{prop}\label{non-symplectic}
	For the non-symplectic method, the variance of the error $e_N$ is given by
	\begin{align*}
		{\rm Var}(e_N)
		&=\alpha^2h\frac{b_1^2(1-\left(\det\left(A\right)\right)^N)+\gamma^2(1-\left(\det\left(A\right)\right)^{N-1})}{\left(2-2\det\left(A\right)\right)\sin^2\left(\xi\right)}\\
		&\quad-\alpha^2\Big(\frac{b_1\sqrt{\det(A)}}{\tan\left(\xi\right)}+\frac{2\gamma\sin^2\left(\frac{h}{2}\right)}{\sin\left(\xi\right)}\Big)(H_1^+ + H_1^-)\notag\\
		&\quad-\alpha^2 b_1\sqrt{\det(A)}(H_2^+ + H_2^-)+\frac{\alpha^2\gamma\sin(h)}{\sin(\xi)}(H_2^{+}-H_2^{-})\notag\\
		&\quad+\frac{\alpha^2hb_1\gamma(1-(\det(A))^N)}{(1-\det(A))\sin(\xi)\tan(\xi)\sqrt{\det(A)}}-\frac{\alpha^2\sin(2T)}{4}\notag\\
		&\quad-\frac{\alpha^2h\gamma^2H_5}{2\sin^2\left(\xi\right)}-\frac{\alpha^2hb_1\gamma H_6}{\sin(\xi)\sqrt{\det(A)}}+\frac{\alpha^2b_1}{\sin(\xi)}(H_3^{+}+H_3^{-})\notag\\
		&\quad+\frac{\alpha^2T}{2}-2\alpha^2b_1-\Big(\frac{\alpha^2hb_1^2}{2\sin^2(\xi)}+\frac{\alpha^2hb_1\gamma\cos(\xi)}{\sin^2(\xi)\sqrt{\det(A)}}\Big)H_4,\notag
	\end{align*}
	where
	\begin{align*}
		H_{1}^{\pm}&=\frac{\sin\left(\xi\pm h\right)-\left(\det\left(A\right)\right)^{\frac{N-1}{2}}\sin\left(N\xi\pm T\right)}{1-2\sqrt{\det\left(A\right)}\cos\left(\xi\pm h\right)+\det\left(A\right)}\\
		&\qquad+\frac{\left(\det\left(A\right)\right)^{\frac{N}{2}}\sin\left(\left(N-1\right)\xi\pm T\mp h\right)}{1-2\sqrt{\det\left(A\right)}\cos\left(\xi\pm h\right)+\det\left(A\right)},\\
		H_2^{\pm}&=\frac{\cos(\xi\pm h)-\sqrt{\det(A)}-(\det(A))^{\frac{N-1}{2}}\cos(N\xi\pm T)}{1-2\sqrt{\det(A)}\cos(\xi\pm h)+\det(A)}\\
		&\qquad+\frac{(\det(A))^{\frac{N}{2}}\cos((N-1)\xi\pm T\mp h)}{1-2\sqrt{\det(A)}\cos(\xi\pm h)+\det(A)},\\
		H_3^{\pm}&=\frac{\sin(\xi\pm h)+(\det(A))^{\frac{N+1}{2}}\sin(N\xi\pm T)}{1-2\sqrt{\det(A)}\cos(\xi\pm h)+\det(A)}\\
		&\qquad-\frac{(\det(A))^{\frac{N}{2}}\sin((N+1)\xi\pm T\pm h)}{1-2\sqrt{\det(A)}\cos(\xi\pm h)+\det(A)},\\
		H_4&=\frac{\cos(2\xi)-(\det(A))^N\cos(2N\xi+2\xi)}{1-2\det(A)\cos(2\xi)+(\det(A))^2}\\
		&\qquad+\frac{(\det(A))^{N+1}\cos(2N\xi)-\det(A)}{1-2\det(A)\cos(2\xi)+(\det(A))^2},\\
		H_5&=\frac{\cos(2\xi)-\left(\det\left(A\right)\right)^{N-1}\cos\left(2N\xi\right)}{1-2\det\left(A\right)\cos\left(2\xi\right)+\left(\det\left(A\right)\right)^2}\\
		&\qquad-\frac{\det(A)-\left(\det\left(A\right)\right)^N\cos\left(2N\xi-2\xi\right)}{1-2\det\left(A\right)\cos\left(2\xi\right)+\left(\det\left(A\right)\right)^2},\\
		H_6&=\frac{\sin(2\xi)-(\det(A))^N\sin(2N\xi+2\xi)+(\det(A))^{N+1}\sin(2N\xi)}{1-2\det(A)\cos(2\xi)+(\det(A))^2}.
	\end{align*}
\end{prop}

\section{Comparison of symplectic and non-symplectic methods for test equation}\label{chap4}
In subsections \ref{3.1} and \ref{3.2}, we study the exact variance of the asymptotic error distributions for several concrete symplectic and non-symplectic methods for the test equation \eqref{model_zz}.
The superiority of symplectic methods will be discussed in subsection \ref{Sec:comparison}.

\subsection{Error evolution for symplectic methods}\label{3.1}
In this subsection, we focus on the asymptotic error distribution of the errors $\{e_N\}_{N\in\mathbb{N}_+}$ for several symplectic methods, including stochastic $\beta$ methods (see, e.g., \cite[equation (2.7)]{Milstein2004}) and general symplectic methods with $\xi=h$.

\subsubsection{Symplectic $\beta$ methods \,{\rm (}$\beta\in[0,1]${\rm )}}
Applying the symplectic $\beta$ method to \eqref{model_zz}, the coefficients $A_\beta$ and $b_\beta$ are given by
\begin{align*}
	A_{\beta}&=\frac{1}{1+\beta(1-\beta)h^2}\begin{pmatrix}
		1-(1-\beta)^2h^2&h\\-h&1-\beta^2h^2
	\end{pmatrix},\\
	b_\beta&=\frac{1}{1+\beta(1-\beta)h^2}\begin{pmatrix}
		(1-\beta)h\\1
	\end{pmatrix}.
\end{align*}
The symplectic $\beta$ method reduces to the midpoint method when $\beta=\frac{1}{2}$. Notice that the components of $A_\beta$ and $b_\beta$ satisfy \eqref{mzn3}.

It is clear that
\begin{align*}
	\det(A_\beta)&=1,\quad {\rm tr}(A_\beta)=\frac{2-\left(2\beta^2-2\beta+1\right)h^2}{1+\beta\left(1-\beta\right)h^2},\quad 	\gamma=\frac{\beta h}{1+\beta(1-\beta)h^2},\\
	&\sin(\xi)=\frac{h\sqrt{4-(1-2\beta)^2h^2}}{2+2\beta(1-\beta)h^2},\qquad\cos(\xi)=\frac{2-(2\beta^2-2\beta+1)h^2}{2+2\beta(1-\beta)h^2}.
\end{align*}
By substituting the above expressions into \eqref{jnm}, we obtain
	\begin{equation}\label{hdj}
		\begin{split}
		&\quad\frac{{\rm Var}(e_N)}{\alpha^2}\\
		=&\frac{2(1-\beta)^2}{4-(1-2\beta)^2h^2}\Big\{\frac{2T+h}{2}-\frac{(1+\beta(1-\beta)h^2)\sin(2N\xi+\xi)}{(4-(1-2\beta)^2h^2)^{\frac{1}{2}}}\Big\}\\
		&-\frac{2\beta^2(1+(1-\beta)\beta h^2)\sin((2N-1)\xi)}{(4-(1-2\beta)^2h^2)^{\frac{3}{2}}}+\frac{2\alpha^2\beta^2(T-\frac{1}{2}h)}{4-(2\beta-1)^2h^2}\\
		&-\frac{4(1-\beta)\beta(1+\beta(1-\beta)h^2)\sin(2N\xi)}{(4-(1-2\beta)^2h^2)^{\frac{3}{2}}}-\frac{(1-\beta)h}{1+\beta(1-\beta)h^2}\\
		&-\Big(\frac{(1-\beta)(2-(2\beta^2-2\beta+1)h^2)}{(1+\beta(1-\beta)h^2)}+4\beta\sin^2(\frac{h}{2})\Big)\frac{Z_1^++Z_1^-}{\sqrt{4-(1-2\beta)^2h^2}}\\
		&-\frac{(1-\beta)h(Z_2^{+}+Z_2^{-})}{2(1+\beta(1-\beta)h^2)}+\frac{2\beta\sin(h)(Z_2^+-Z_2^-)}{\sqrt{4-(1-2\beta)^2h^2}}+\frac{2T-\sin(2T)}{4}\\
		&+\frac{2(1-\beta)(Z_3^{+}+Z_3^{-})}{\sqrt{4-(1-2\beta)^2h^2}}+\frac{2(1-\beta)\beta T(2-(2\beta^2-2\beta+1)h^2)}{(1+\beta(1-\beta)h^2)(4-(1-2\beta)^2h^2)}.
	\end{split}
	\end{equation}

\textbf{Case 1}. $\beta\in[0,\frac{1}{2}-\frac{\sqrt{6}}{6})\cup(\frac{1}{2}-\frac{\sqrt{6}}{6},\frac{1}{2}+\frac{\sqrt{6}}{6})\cup(\frac{1}{2}+\frac{\sqrt{6}}{6},1]$.

By the Taylor expansion $\arcsin(h)=h+\frac{1}{6}h^3+\frac{3\, }{40}h^5+\mathcal{O}(h^7)$, we have
$\xi=h+ (\frac{{\mathrm{\beta}}^2}{2} - \frac{\mathrm{\beta}}{2} + \frac{1}{24})h^3+ (\frac{3 {\mathrm{\beta}}^4}{8} - \frac{3 {\mathrm{\beta}}^3}{4} + \frac{7 {\mathrm{\beta}}^2}{16} - \frac{\mathrm{\beta}}{16} + \frac{3}{640}) h^5+\mathcal{O}(h^7),$
which implies that
	$
		\sin(\frac{\xi-h}{2})=(\frac{{\mathrm{\beta}}^2}{4} - \frac{\mathrm{\beta}}{4} + \frac{1}{48})\, h^3+(\frac{3\, {\mathrm{\beta}}^4}{16} - \frac{3\, {\mathrm{\beta}}^3}{8} + \frac{7\, {\mathrm{\beta}}^2}{32} - \frac{\mathrm{\beta}}{32} + \frac{3}{1280})\, h^5+\mathcal{O}(h^7).$
As other terms in \eqref{hdj} can be similarly expanded, one has
\begin{align*}
	{\rm Var}(e_N)=(3 {\mathrm{\beta}}^2 - 3\mathrm{\beta} + 1)\Big(\frac{\alpha^2}{6}T+\frac{\alpha^2}{12}\sin(2T)\Big)h^2+\mathcal{O}(h^3).
\end{align*}
Hence we derive
the central limit theorem of the error of the stochastic $\beta$ method
\begin{align*}
	Ne_N-\mathbb{E}[Ne_N]\stackrel{d}{\Rightarrow} \mathcal{N}\Big(0,\alpha^2T^2(3 {\mathrm{\beta}}^2 - 3\mathrm{\beta} +1)\Big(\frac{T}{6}+\frac{\sin(2T)}{12}\Big)\Big).
\end{align*}

\textbf{Case 2}. $\beta\in\{\frac{1}{2}-\frac{\sqrt{6}}{6},\frac{1}{2}+\frac{\sqrt{6}}{6}\}$.

Since $\frac{\beta^2}{4}-\frac{\beta}{4}+\frac{1}{48}=0$, it yields that
$\sin(\frac{\xi-h}{2})=\frac{h^5}{960}+\frac{h^7}{24192}+\mathcal{O}(h^9)$.
By expanding  $\cos(\frac{\xi-h}{2}),\cos((N-\frac{1}{2})(\xi-h)),\sin((N-\frac{1}{2})(\xi-h))$ and $\cos((N+\frac{1}{2})(\xi-h))$ to $\mathcal{O}(h^7)$, we can similarly use Proposition \ref{varforsym} to obtain 
\begin{align*}
	{\rm Var}(e_N)&=\alpha^2\left(\frac{7T}{36}+\frac{\sin(2T)}{16}\right)h^2+\mathcal{O}(h^3).
\end{align*}
Consequently, the following central limit theorem of the error holds 
$$Ne_N-\mathbb{E}[Ne_N]\stackrel{d}{\Rightarrow} \mathcal{N}\Big(0,\alpha^2T^2\Big(\frac{7T}{36}+\frac{\sin(2T)}{16}\Big)\Big).$$

\begin{rema}
	For any fixed $T>0$, the error constant of the midpoint method ($\beta=\frac{1}{2}$) is minimal among symplectic $\beta$ methods. 
\end{rema}

\subsubsection{General symplectic methods with $\xi=h$}
By \eqref{jiaodu}, for the symplectic method, the condition $\xi=h$ is equivalent to ${\rm tr}(A)=2\cos(h)$. By assuming further that the coefficients $a_{ij}(\cdot),b_i(\cdot),i,j=1,2$, are smooth functions satisfying \eqref{mzn3}, we have
\begin{equation}\label{sjx}
	\begin{split}
		a_{11} &= 1+a_{11}^{(1)}h^2+a_{11}^{(2)}h^3+a_{11}^{(3)}h^4+\mathcal{O}(h^5),\\
		a_{22}&=1-(1+a_{11}^{(1)})h^2-a_{11}^{(2)}h^3+(\frac{1}{12}-a_{11}^{(3)})h^4+\mathcal{O}(h^5),\\
		a_{12}&=h+a_{12}^{(1)}h^2+a_{12}^{(2)}h^3+a_{12}^{(3)}h^4+\mathcal{O}(h^5),\\
		a_{21}&=-h+a_{21}^{(1)}h^2+a_{21}^{(2)}h^3+a_{21}^{(3)}h^4+\mathcal{O}(h^5),\\
		b_1&=b_1^{(1)}h+b_1^{(2)}h^2+b_1^{(3)}h^3+b_1^{(4)}h^4+\mathcal{O}(h^5),\\
		b_2&=1+b_2^{(1)}h+b_2^{(2)}h^2+b_2^{(3)}h^3+b_2^{(4)}h^4+\mathcal{O}(h^5).
	\end{split}
\end{equation}
In view of \eqref{hms} and \eqref{sjx}, it holds that
${\rm Var}(e_N)=K^e_Th^2+\mathcal{O}(h^3)$,
where the error constant $K^e_T$ is given by
	\begin{equation}\label{lkj}
		\begin{split}
		K^e_T&=\alpha^2\frac{1+3b_1^{(1)}(b_1^{(1)}-1)+3(a_{12}^{(1)}+b_2^{(1)})^2}{6}T\\
		&\quad+\alpha^2\frac{(1-2b_1^{(1)})(a_{12}^{(1)}+b_2^{(1)})(\cos(2T)-1)}{2}\\
		&\quad+\alpha^2\frac{1+3b_1^{(1)}(b_1^{(1)}-1)-3(a_{12}^{(1)}+b_2^{(1)})^2}{12}\sin\left(2T\right).
				\end{split}
	\end{equation}
Furthermore, the corresponding central limit theorem of the error is 
\begin{align*}
Ne_N-\mathbb{E}[Ne_N]\stackrel{d}{\Rightarrow} \mathcal{N}\left(0,T^2K_T^e\right).
\end{align*}

We present three existing symplectic methods satisfying $\xi=h$ in Table \ref{table:1}.
\begin{table}[htb] 
	\begin{center} 
		\renewcommand{\arraystretch}{1} 
		\resizebox{1\textwidth}{0.085\textheight} { 
			\begin{tabular}{|c|c|c|c|}  
				\hline  \textbf{Symplectic method} & $A$ & $b$ & $K^e_T$ \\  
				\hline  Exponential method & 	
				$\begin{pmatrix}
					\cos(h)&\sin(h)\\-\sin(h)&\cos(h)
				\end{pmatrix}$
				& 
				$\begin{pmatrix}
					0\\1
				\end{pmatrix} $&
				$\frac{\alpha^2}{6}T+\frac{\alpha^2\sin(2T)}{12}$
			\\ 
				\hline  Integral method & $\begin{pmatrix}
					\cos(h)&\sin(h)\\-\sin(h)&\cos(h)
				\end{pmatrix}$ & $\begin{pmatrix}
					\sin(h)\\\cos(h)
				\end{pmatrix} $&
				$\frac{\alpha^2}{6}T+\frac{\alpha^2\sin(2T)}{12}$\\ 
				\hline  Optimal method & $\begin{pmatrix}
					\cos(h)&\sin(h)\\-\sin(h)&\cos(h)
				\end{pmatrix}$ & $\frac{1}{h}\begin{pmatrix}
					2\sin^2(\frac{h}{2})\\\sin(h)
				\end{pmatrix} $ &
				$\frac{\alpha^2}{24}T+\frac{\alpha^2\sin(2T)}{48}$
	 \\
				\hline  
		\end{tabular}}
		\caption{Symplectic methods with $\xi=h$ for \eqref{model_zz}.} \label{table:1} 
	\end{center}  
\end{table}			

\begin{rema}
	When $T\gg1$, among the symplectic methods with $\xi=h$, the numerical method satisfying \begin{align}\label{hlq}
		b_1^{(1)}=\frac{1}{2}\quad\text{and}\quad a_{12}^{(1)}+b_2^{(1)}=0
	\end{align}
	has the minimal error constant. Obviously, the optimal method fulfills \eqref{hlq}. Besides, we construct a symplectic method satisfying \eqref{hlq}:
	\begin{align}\label{mdb}
		A=\begin{pmatrix}
			\cos(h)&\sin(h)\\-\sin(h)&\cos(h)
		\end{pmatrix},\qquad b=\begin{pmatrix}
			\frac{h}{2}\\1
		\end{pmatrix}.
	\end{align}
\end{rema}
\subsection{Error evolution for non-symplectic methods}\label{3.2}
This subsection is devoted to studying the asymptotic error distribution of errors $\{e_N\}_{N\in\mathbb{N}_+}$ for several non-symplectic methods, including stochastic $\theta$-methods $(\theta\neq\frac{1}{2})$ and the PC(EM-BEM) method (see e.g., \cite{review2015}).

\subsubsection{Stochastic $\theta$-methods {\rm (}$\theta\in[0,\frac{1}{2})\cup(\frac{1}{2},1]${\rm )}}
For $\theta\in[0,\frac{1}{2})\cup(\frac{1}{2},1]$, the coefficients of the stochastic $\theta$-method for \eqref{model_zz} are given by
\begin{align*}
	A^\theta=\frac{1}{1+\theta^2h^2}\begin{pmatrix}
		1-(1-\theta)\theta h^2&h\\-h&1-(1-\theta)\theta h^2
	\end{pmatrix},\qquad b^\theta=\frac{1}{1+\theta^2h^2}\begin{pmatrix}
		\theta h\\1
	\end{pmatrix},
\end{align*}
for which \eqref{mzn3} is satisfied. It is readily to show that
\begin{align*}
	\cos(\xi)&=\frac{1-(1-\theta)\theta h^2}{\sqrt{1+(1-\theta)^2h^2}\sqrt{1+\theta^2h^2}},\\
	\sin(\xi)&=\frac{h}{\sqrt{1+(1-\theta)^2h^2}\sqrt{1+\theta^2h^2}},\\
	\det(A^\theta)&=\frac{1+(1-\theta)^2h^2}{1+\theta^2h^2},\quad
	\gamma=\frac{(1-\theta)h}{1+\theta^2h^2}.
\end{align*}
On this basis, we can further formulate $\xi$ in terms of $h$ as follows
$\xi=h+(\theta-\theta^2 - \frac{1}{3})h^3+(\theta^4-2{\mathrm{\theta}}^3+2\theta^2-\theta+ \frac{1}{5})h^5+(3\theta^5-\theta^6-5\theta^4+5{\mathrm{\theta}}^3 - 3\theta^2+\theta-\frac{1}{7})h^7+\mathcal{O}(h^9).$
Plugging the above relations into Proposition \ref{non-symplectic}, 
we can obtain 
\begin{align}\label{mak}
	{\rm Var}(e_N)=K^{\theta}_Th^2+\mathcal{O}(h^3),
\end{align}
where the error constant $K^{\theta}_T$ is given by
\begin{align*}
	K^{\theta}_T
	&=\frac{\alpha^2(2\theta-1)^2}{24}T^3-\frac{\alpha^2\sin(2T)(2\theta-1)^2}{16}T^2\\
	&\quad+\alpha^2\left(\frac{(1-\theta)^3-5\theta^3}{6}+\frac{(2\theta-1)^2\cos(2T)}{16}\right)T\\
	&\quad+\alpha^2\left(\frac{1}{48}+\frac{\left(2\theta-1\right)^2}{32}\right)\sin(2T).
\end{align*}
Thus the error $\{e_N\}_{N\in\mathbb{N}_+}$ of the stochastic $\theta$-method with $\theta\in[0,\frac{1}{2})\cup(\frac{1}{2},1]$ satisfies
the following central limit theorem 
 $$Ne_N-\mathbb{E}[Ne_N]\stackrel{d}{\Rightarrow} \mathcal{N}(0,T^2K_T^\theta).$$
\subsubsection{PC(EM-BEM) method}
The PC(EM-BEM) method is a predictor-corrector method using the Euler--Maruyama method as the predictor and the backward Euler--Maruyama method as the corrector, whose coefficients are given by
\begin{align*}
	A=\begin{pmatrix}
		1-h^2&h\\-h&1-h^2
	\end{pmatrix},\qquad b=\begin{pmatrix}
		h\\1
	\end{pmatrix}.
\end{align*}
By a straightforward calculation, we have
\begin{align*}
	\det(A)&=1-h^2+h^4,\quad {\rm tr}(A)=2-2h^2,\quad \gamma=h^3,\\
	\sin(\xi)&=\frac{h}{\sqrt{1-h^2+h^4}},\quad \cos(\xi)=\frac{1-h^2}{\sqrt{1-h^2+h^4}},
\end{align*}
which leads to $\xi=h+ \frac{2 h^3}{3}+\frac{h^5}{5}+\frac{h^7}{7}+\mathcal{O}(h^9).$ Further, we can simplify the Taylor series in Proposition \ref{non-symplectic} into
	\begin{equation}\label{mzn}
		{\rm Var}(e_N)=\alpha^2\Big(\frac{T^3}{24} -\frac{T^2\sin(2T)}{16} + \frac{6\cos^2(T)+5}{48} T + \frac{5\sin(2T)}{96}\Big)h^2+\mathcal{O}(h^3).
	\end{equation}
Then 
for the PC(EM-BEM) method, 
$Ne_N-\mathbb{E}[Ne_N]$ converges in distribution to 
	$$\mathcal{N}\Big(0,\alpha^2T^2\Big(\frac{T^3}{24} -\frac{T^2\sin(2T)}{16} + \frac{6\cos^2(T)+5}{48} T + \frac{5\sin(2T)}{96}\Big)\Big).$$

\subsection{Superiority of symplectic methods} \label{Sec:comparison}
Let $e^{(s)}_N$ and $e^{(ns)}_N$ be the errors of the considered symplectic method and non-symplectic method for \eqref{model_zz}, respectively.
It has been shown in subsections \ref{3.1} and \ref{3.2} that the error constant $K_T^{(s)}:=K_T\sim T$ for symplectic methods and that  the error constant  $K_T^{(ns)}:=K_T\sim T^3$ for non-symplectic methods, respectively. Since $K_T^{(s)}/K_T^{(ns)}\to 0$ as $T\to\infty$, one can choose a sufficiently large constant $T_0>0$ so that $K_T^{(s)}<K_T^{(ns)}$ for all $T\ge T_0$. 
 
Fix $\epsilon>0$ and $T\ge T_0$. It follows from $K_T^{(s)}<K_T^{(ns)}$ that
		\begin{align*}
				&\lim_{N\rightarrow \infty}\mathbb{P}\big(|Ne^{(s)}_N-\mathbb{E}[Ne^{(s)}_N]|>\epsilon\big)=
				\mathcal{N}(0,T^2K^{(s)}_T)(\{|x|>\epsilon\})\\
				&=\mathcal{N}(0,T^2K^{(ns)}_T)\big(\big\{|x|>\epsilon\sqrt{K^{(ns)}_T/K^{(s)}_T}\big\}\big)
				<\mathcal{N}(0,T^2K^{(ns)}_T)\big(\big\{|x|>\epsilon\big\}\big)\\
				&=\lim_{N\rightarrow \infty}\mathbb{P}\big(|Ne^{(ns)}_N-\mathbb{E}[Ne^{(ns)}_N]|>\epsilon\big).
			\end{align*}
		Therefore there exists $N_1\in\mathbb{N}_+$ such that for any $N\ge N_1$,
\begin{align}\label{eq:Cor1}
		\mathbb{P}(|Ne^{(s)}_{N}-\mathbb{E}[Ne^{(s)}_N]|>\epsilon)<\mathbb{P}(|Ne^{(ns)}_N-\mathbb{E}[Ne^{(ns)}_N]|>\epsilon),\quad\forall N\geq N_0,
		\end{align}

Since $e^{(s)}_N$ and $e^{(ns)}_N$ are Gaussian random variables,
\begin{align*}
	&\quad\lim_{N\rightarrow\infty}\frac{1}{N^2}\log\Big(\frac{\mathbb{P}(|\overline{e}_N|>\epsilon)}{\mathbb{P}(|\hat{e}_N|>\epsilon)}\Big)\\
	&=
	\lim_{N\rightarrow\infty}\frac{1}{N^2}\log\mathbb{P}(|\overline{e}_N|>\epsilon)-\lim_{N\rightarrow\infty}\frac{1}{N^2}\log\mathbb{P}(|\widehat{e}_N|>\epsilon)=-\mathcal{R}_\epsilon(T),
\end{align*}
	where $\mathcal{R}_\epsilon(T):=\frac{\epsilon^2(K_T^{(ns)}-K_T^{(s)})}{2T^2K_T^{(ns)}K_T^{(s)}}>0$.
	Hence there exists some $N_0\in\mathbb{N}_+$ such that
\begin{align}\label{eq:Cor2}
\frac{\mathbb{P}(|\overline{e}_N|>\epsilon)}{\mathbb{P}(|\hat{e}_N|>\epsilon)}\leq \exp\Big(-\frac{1}{2}N^2\mathcal{R}_\epsilon(T)\Big),\qquad\forall N>N_0.
\end{align}

\begin{figure}
	\centering
	\subfigure[The exponential method]{
		\begin{minipage}{0.45\linewidth}
			\centering
			\includegraphics[width=5.7cm]{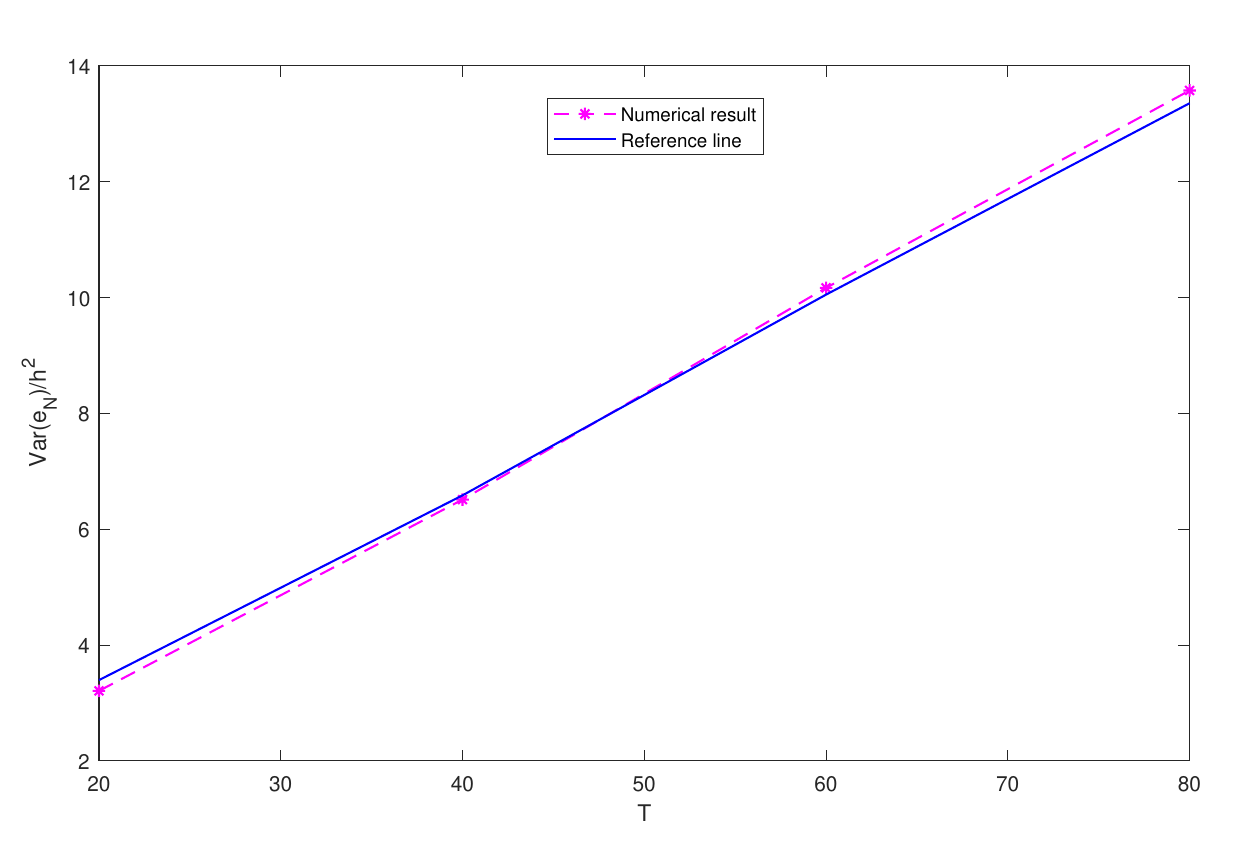}
	\end{minipage}}	
	\subfigure[The integral method]{
		\begin{minipage}{0.45\linewidth}
			\centering
			\includegraphics[width=5.7cm]{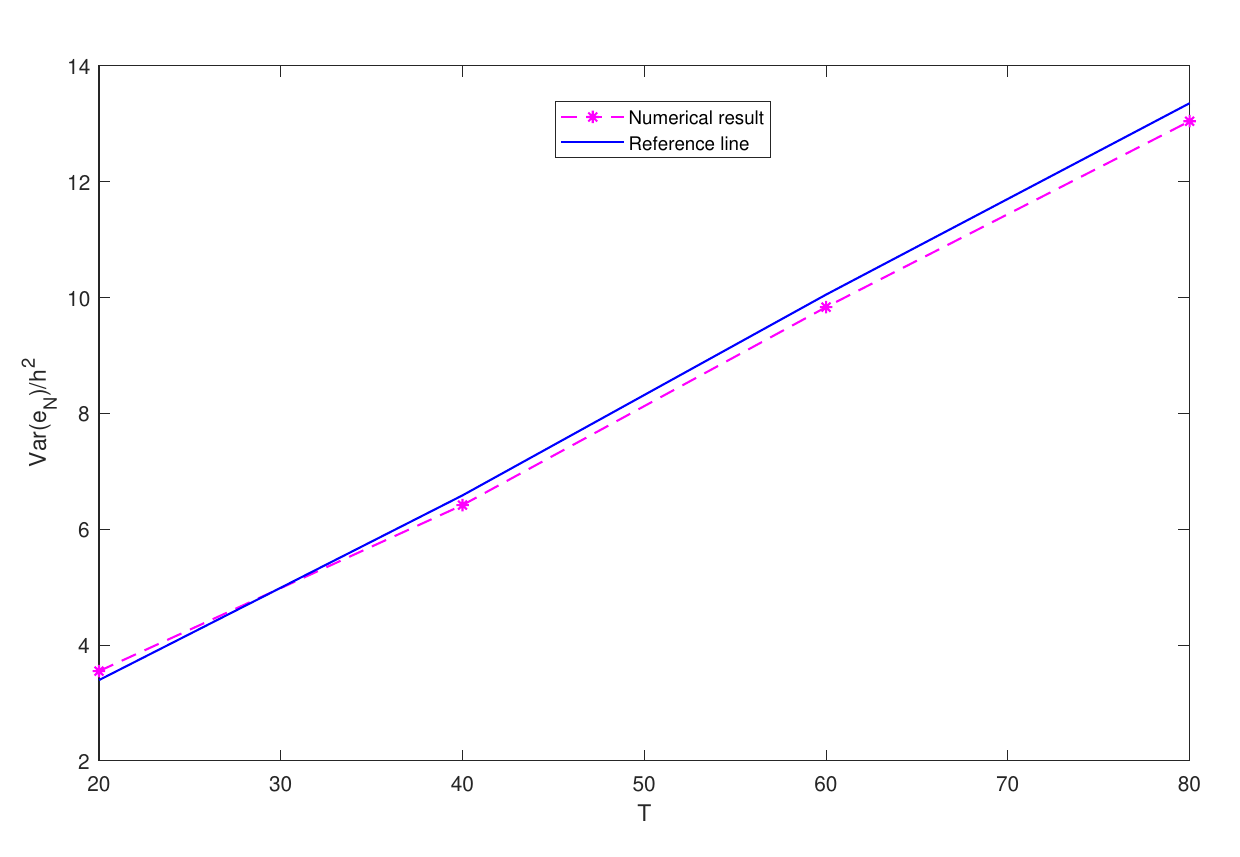}
	\end{minipage}}

	\subfigure[The optimal method]{
		\begin{minipage}{0.45\linewidth}
			\centering
			\includegraphics[width=5.7cm]{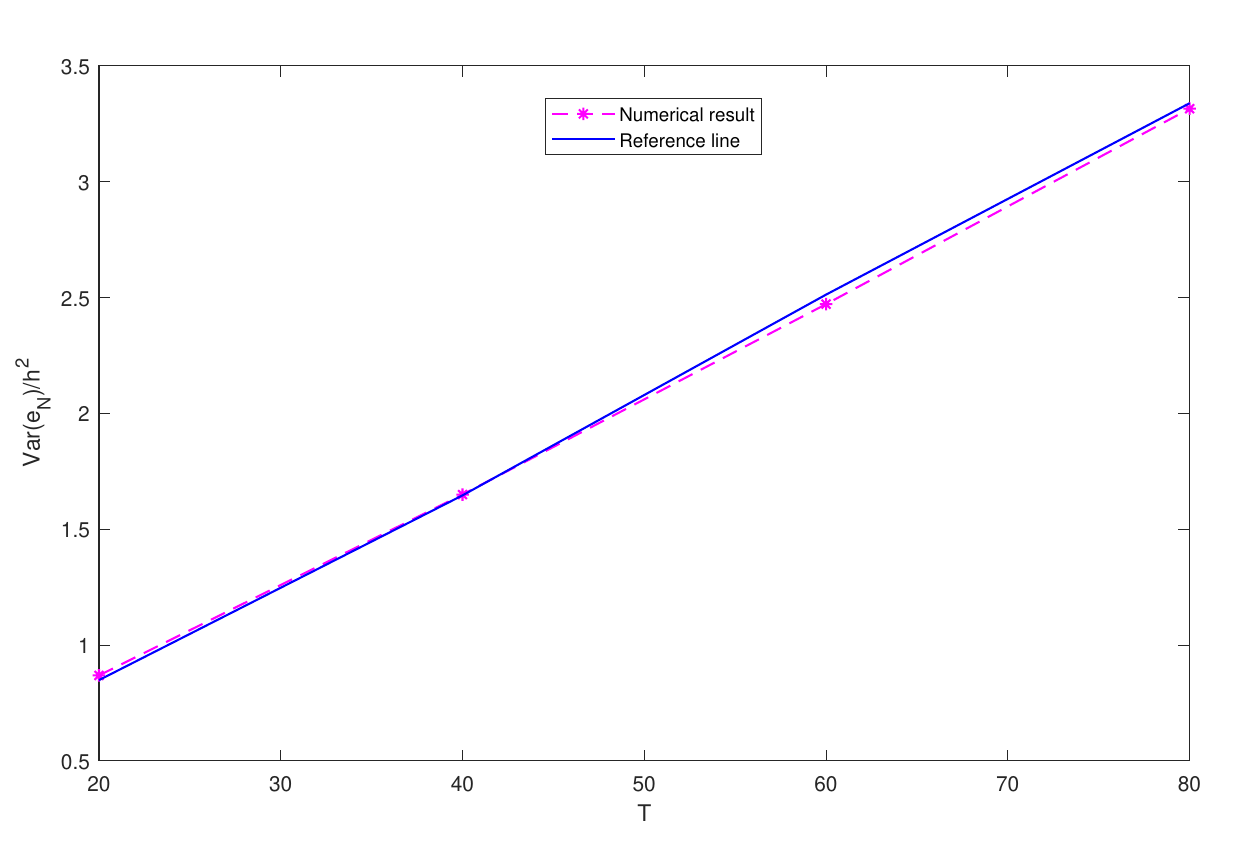}
	\end{minipage}}	
	\subfigure[The method \eqref{mdb}]{
		\begin{minipage}{0.45\linewidth}
			\centering
			\includegraphics[width=5.7cm]{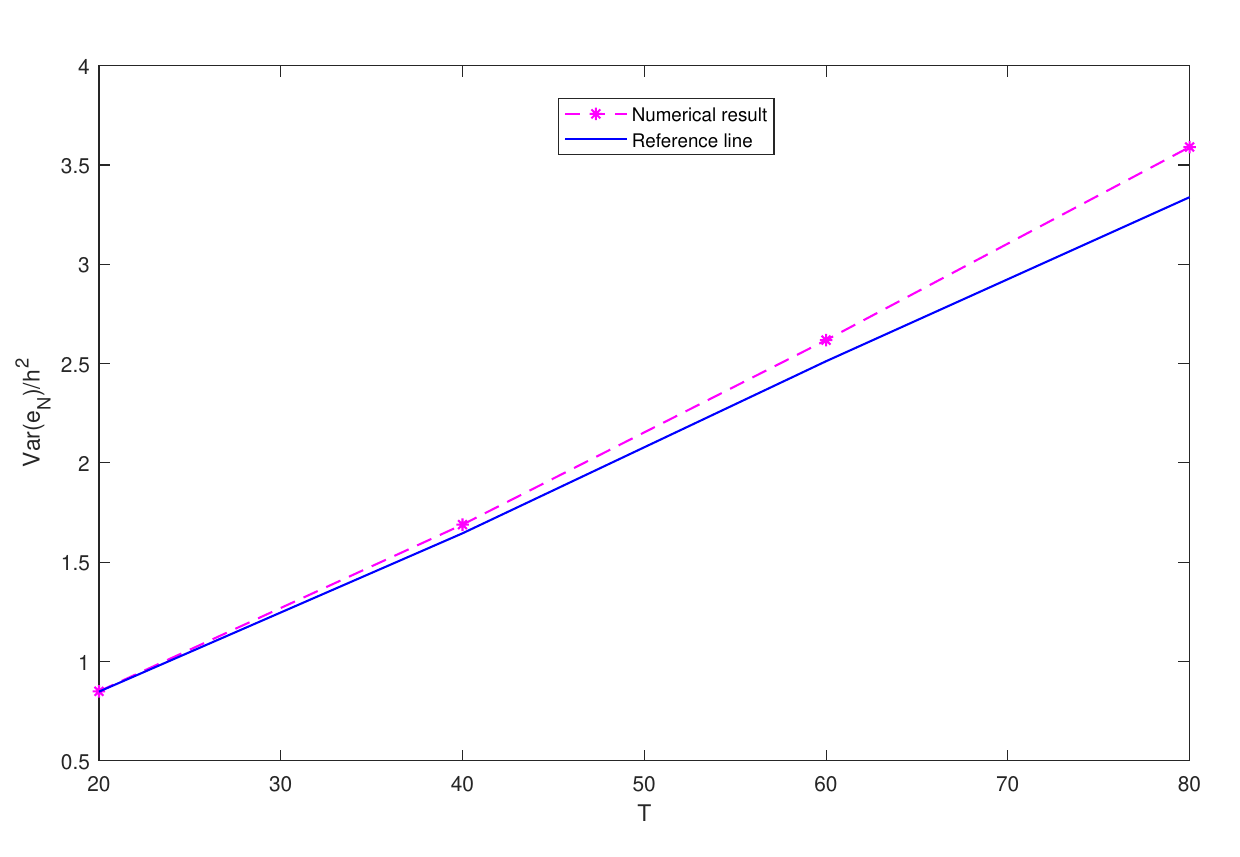}
	\end{minipage}}
	\caption{The relation between Var$(e_N)/h^2$ and $T$ of symplectic methods for the test equation \eqref{model_zz}.}
	\label{km1}
\end{figure}

The above inequality \eqref{eq:Cor1} compares the error's deviation of symplectic and non-symplectic methods for the test equation \eqref{model_zz}. 
The relation \eqref{eq:Cor2} reveals that at the scale $\epsilon$, the probability of the error's deviation from the zero decays exponentially faster for the symplectic method
than that for the non-symplectic method. 
Hence, symplectic methods are superior to non-symplectic methods from the perspective of the asymptotic error distribution, although they may have the same mean square convergence order.

\section{Numerical experiments}\label{chap5}
\begin{figure}
	\centering
	\subfigure[The stochastic $\theta$-method]{
		\begin{minipage}{0.45\linewidth}
			\centering
			\includegraphics[width=5.7cm]{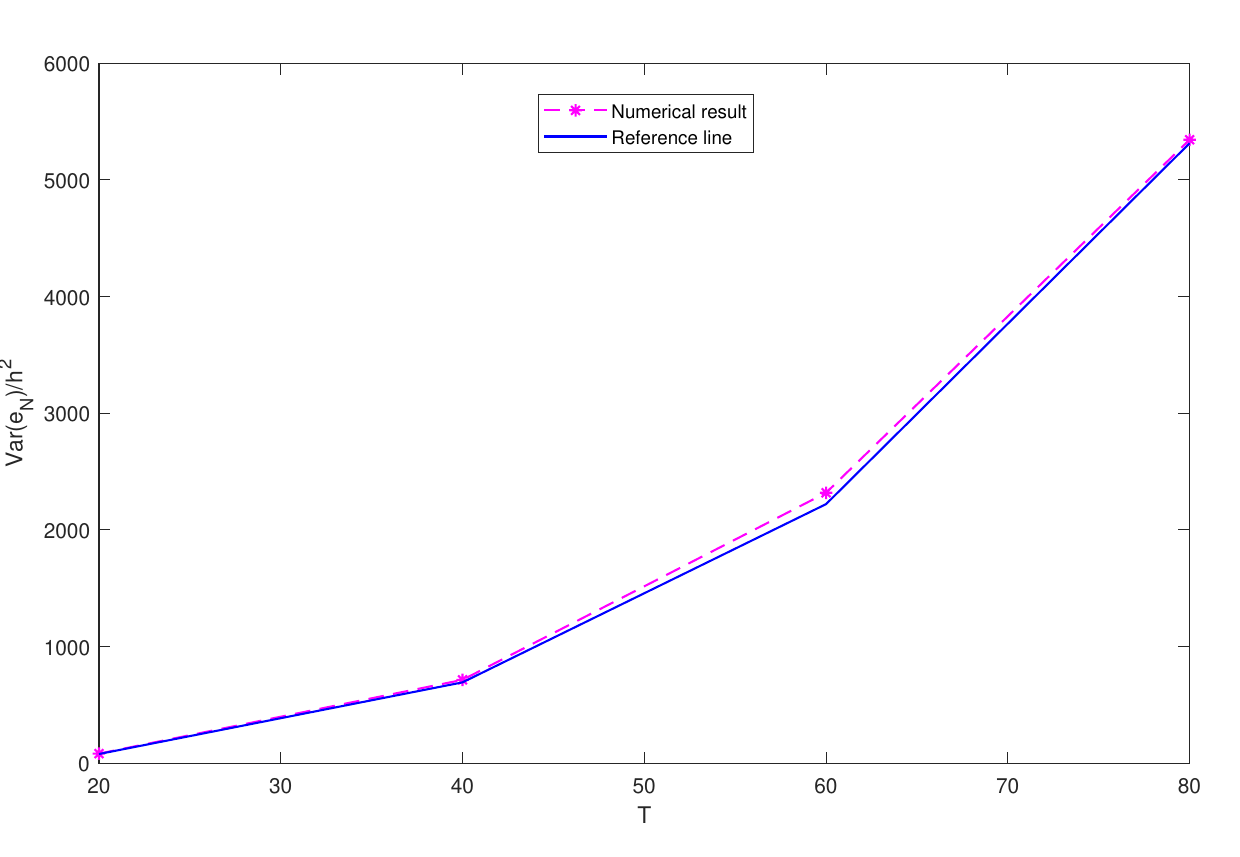}
	\end{minipage}}	
	\subfigure[The PC(EM-BEM) method]{
		\begin{minipage}{0.45\linewidth}
			\centering
			\includegraphics[width=5.7cm]{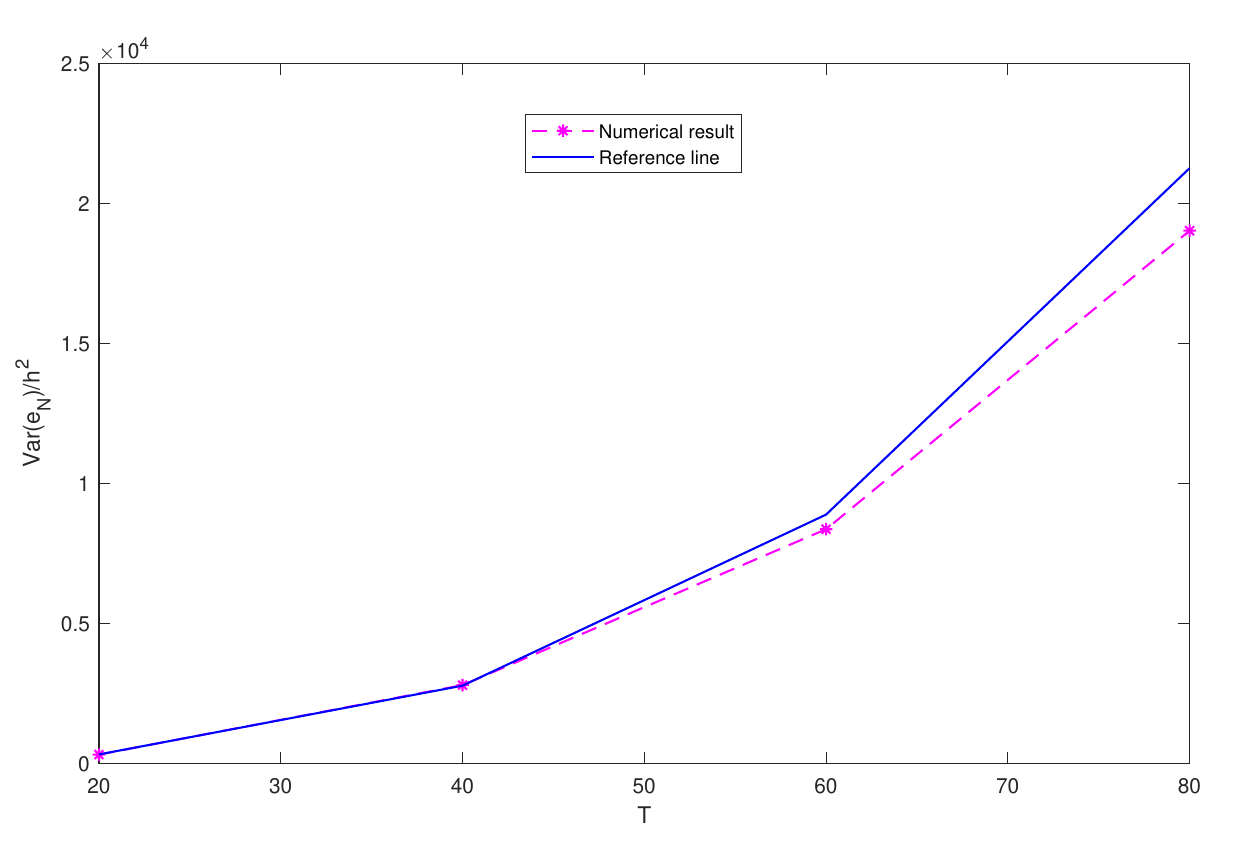}
	\end{minipage}}
	\caption{The relation between Var$(e_N)/h^2$ and $T$ of non-symplectic methods for the test equation \eqref{model_zz}.}
	\label{km2}
\end{figure}
This section provides numerical experiments to illustrate the theoretical results by numerically simulating the exponential method, integral method, optimal method, method \eqref{mdb}, $\theta$ method ($\theta=\frac{1}{4}$) and PC(EM-BEM) method for the test equation \eqref{model_zz}. In the following experiments, we set the initial data $(x_0,y_0)=(1,0)$ and $\alpha=1$. 

First, we show the relation between ${\rm Var}(e_N)/h^2$ and $T$, evaluated with times $20,40,60,80$ by $2000$ sample paths. 
For the exponential method, integral method, optimal method and method \eqref{mdb}, we choose $N=2^7$ with the corresponding time step-sizes $(\frac{5}{2^5},\frac{5}{2^4},\frac{15}{2^5},\frac{5}{2^3})$. As is displayed in Fig. \ref{km1}, 
${\rm Var}(e_N)/h^2$ for these symplectic methods grows linearly with respect to time $T$, which is consistent with the reference line $g(T)=K^e_T\sim T$, where $K^e_T$ is given by \eqref{lkj}. For the stochastic $\theta$-method and PC(EM-BEM) method, we consider the time step-sizes $(\frac{5}{2^{13}},\frac{5}{2^{12}},\frac{15}{2^{13}},\frac{5}{2^{11}})$ by taking $N=2^{15}$. Fig. \ref{km2}
shows that ${\rm Var}(e_N)/h^2\sim T^3$, which coincides with the theoretical results \eqref{mak} and \eqref{mzn}.

\bibliographystyle{plain}
\bibliography{references}
\medskip
Received xxxx 20xx; revised xxxx 20xx; early access xxxx 20xx.
\medskip

\end{document}